\documentclass{article}

\usepackage{amsmath,amsfonts,amssymb,amsthm,graphicx}

\def\d#1{{#1\kern-0.4em\char"16\kern-0.1em}}
\def\D#1{{\raise0.2ex\hbox{-}\kern-0.4em #1}}

\newtheorem{theorem}{Theorem}[section]

\newtheorem{proposition}{Proposition}[section]
\newtheorem{remark}{Remark}[section]
\newtheorem{lemma}{Lemma}[section]
\newtheorem{corollary}{Corollary}[section]
\theoremstyle{definition}\newtheorem{example}{Example}[section]

\def\N{{\mathcal{N}}}
\def\R{{\mathcal{R}}}

\def\D{{\cal{D}}}

\def\A{{\mathcal{A}}}

\def\CC{\mathbb{C}}
\def\NN{\mathbb{N}}

\newcommand{\mat}[4]{\left[\begin{array}{cc}#1 & #2 \\ #3 & #4 \\
	\end{array}\right]}

\author{Bogdan D. Djordjevi\'c\footnote{Mathematical Institute of the Serbian Academy of Sciences and Arts, Belgrade, Republic of Serbia.\qquad {\tt bogdan.djordjevic@turing.mi.sanu.ac.rs;\ bogdan.djordjevic93@gmail.com}}, Neboj\v sa \v C. Din\v ci\'c\footnote{University of Ni\v s, Faculty of Sciences and Mathematics, Ni\v s, Republic of Serbia.\qquad {\tt nebojsa.dincic@pmf.edu.rs;\ ndincic@hotmail.com}}, and Mihailo Djuri\'c\footnote{Student at University of Belgrade, Faculty of Mathematics, Belgrade, Republic of Serbia.\qquad {\tt mm25073@alas.matf.bg.ac.rs;\ mihajlodjuric26@gmail.com}}}
\date{}

\begin{document}
	
	\title{Commutator-based Solutions to $AXA=XAX$ when $A^n=tA^2$}	
	
	\maketitle
	
	\begin{abstract}
		This paper offers a novel analytical method for obtaining new non-commuting solutions for the Yang-Baxter--like matrix equation $AXA=XAX$, when the coefficient matrix $A$ satisfies the square-cyclic condition $A^n=tA^2$. All the consistency criteria are proved and the obtained results are demonstrated on worked examples. The findings provided by this article simultaneously generalize several existing special cases regarding this topic. Consequently, by employing the obtained results, the previously unstudied inhomogeneous Yang-Baxter matrix equation $AXA=XAX+B$ is solved when $A$ is either invertible, or satisfies $A^2=0$.
	\end{abstract}
	
\noindent\textbf{Keywords and phrases:} 	Yang-Baxter-like matrix equation, Sylvester equation, Core-nilpotent decomposition, Systems of nonlinear matrix equations.\\
\noindent\textbf{MSC2020:} 47J05, 15A24, 47A60. 
	\section{Introduction} \label{intro-sec}
	
	The Yang-Baxter--like matrix equation 
	\begin{equation}\label{YBME}
		AXA=XAX
	\end{equation}
	appears in various areas of mathematics and physics: from modeling fluid dynamics, integrable scattering systems and quantum groups, to knot theory, braid groups, digital signal processing, and object-centric machine learning, to name a few. Its broad scope of applications is precisely the reason why this equation has been studied in many different settings, such as when $A$ is diagonalizable, nilpotent, idempotent, $m-$potent ($A^m=A$), permutation matrix, circulant matrix, rotation matrix, unitary matrix, and so on, consult \cite{DinDjor}, \cite{NCDBDD2}, \cite{BDperm}, \cite{Don2016}, \cite{DonDin2017}, \cite{QH}, \cite{MSIA}, \cite{MSIAJDQHLZ}, \cite{MSIAJDQ}, \cite{a4a}, \cite{ZYWDH2021}, and the references therein. Thus, solving the equation \eqref{YBME} under such concrete constraints relies on highly specialized techniques and on case-specific arguments. As a result, such methods are generally unsuitable for solving the equation in a broader context, since the associated techniques and calculations do not naturally extend to weaker assumptions. Consequently, the obtained solutions $X$ depend on the imposed particular structure of the input matrix $A$ as well, and do not offer an insight into a more general scenario.
	
	In this paper, we develop an original commutator-based approach that avoids these restrictive assumptions on the matrix $A$ and enables treatment of \eqref{YBME} under a weaker premise. For clarity's sake, we assume that the matrix $A$ is provided in the manner that 
	\begin{equation}
		\label{sqcA} A^n=tA^2, \; n>2, \; t\neq 0,
	\end{equation} and we refer to this condition as the square-cyclic property. Clearly \eqref{sqcA} contains all the special cases for $A$ mentioned above, but also contains plenty of new possibilities which, to the best of our knowledge, have not yet been studied in the literature (for example, when $A$ is skew-symmetric). The free nonzero parameter $t$ indicates that the norm of $A$ need not be preserved under the power transform $A\mapsto A^n$, thus leaving the partial isometry setting. Our calculations will rely on the square-cyclic property \eqref{sqcA}, however, we point  out that our principle and methodology hold for any polynomially-cyclic condition imposed on $A$.
	
	Throughout the paper, it is assumed that all matrices are complex, and are treated as linear operators over finite-dimensional complex vector spaces. Unless stated differently, it is assumed that $A\in\CC^{N\times N}$, for some natural $N>1$, and we seek the solution matrices $X$ within the same matrix algebra $\CC^{N\times N}$.
	
	\subsection{Preliminaries}

	For a given matrix/linear operator $L$, we use the notation $\N(L)$ and $\R(L)$ for the null-space and the range of $L$, respectively. Similarly, if $\lambda$ is an eigenvalue for $L$, then the corresponding eigenspace is denoted as $\N(L-\lambda I)$. The set of all eigenvalues, i.e., the spectrum of $L$ is denoted as $\sigma(L)$. 
	
	Recall that the Kronecker product of an $\ell\times r$ matrix $S$ and a $p\times q$ matrix $T$ is defined as the $p\ell\times qr$ block matrix $S\otimes T$ given by
	$$S\otimes T=\left[\begin{array}{cccc}
		s_{11}T & ... & s_{1 r}T\\
		... & ... & ...\\
		s_{\ell 1}T & ... & s_{\ell r}T
	\end{array}\right]=[s_{ij}T].$$
	
	It is well-known that the Kronecker product is associative and bilinear, but non-commutative. Closely related to the Kronecker product is the usual ``vectorization trick", often employed in solving matrix equations: by ``$\operatorname{vec}$" we denote the vectorization operator that stacks the columns of a given matrix $S=[s_{ij}]_{\ell\times r}$ one atop of the other:
	$$\operatorname{vec}(S)=[s_{11},...,s_{\ell 1},s_{12},...,s_{\ell 2},...,s_{1r},...,s_{\ell r}]^T.$$
	(We emphasize that in \cite{BiG} the vectorization is done by stacking the rows, so some formulae are slightly different). Then for any conformal matrices $S,T,L$:
	$$\operatorname{vec}(TLS)=(S^T\otimes T)\operatorname{vec}(L).$$
	
	Another method used for solving matrix equations involves the employment of generalized inverses. For a given matrix $S\in\mathbb{C}^{\ell\times r}$, a matrix $G\in\CC^{r\times\ell}$ that satisfies $SGS=S$ is said to be an inner inverse of $S$. The set of all inner inverses of $S$ is usually denoted by $S\{1\}$, while its elements are denoted by $S^-$ or by $S ^{(1)}$. Probably the most important application of inner inverses is the following theorem due to Penrose.
	
	\begin{theorem}\cite{Pen, BiG}\label{ThmPenrose}
		Let $S\in\mathbb{C}^{\ell\times r},\;T\in\mathbb{C}^{p\times q},\;W\in\mathbb{C}^{\ell\times q}$. Then the matrix equation 
		\begin{equation}\label{SLTW}SLT=W\end{equation}
		is consistent for $L$, if and only if, for some $S^{(1)},T^{(1)}$, 
		$$SS^{(1)}WT^{(1)}T=W,$$ 
		in which case the general solution is 
		$$L=S^{(1)}WT^{(1)}+Z-S^{(1)}SZTT^{(1)},$$
		for an arbitrary matrix $Z\in\mathbb{C}^{r\times p}$.
	\end{theorem}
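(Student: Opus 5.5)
We prove Penrose's theorem in two parts: first the consistency criterion, then the description of all solutions.

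\emph{Consistency.} Suppose $SS^{(1)}WT^{(1)}T=W$ for some inner inverses $S^{(1)},T^{(1)}$. Then $L_0=S^{(1)}WT^{(1)}$ satisfies $SL_0T=W$, so the equation \eqref{SLTW} is consistent. Conversely, suppose some $L$ satisfies $SLT=W$. Take any $S^{(1)}\in S\{1\}$ and $T^{(1)}\in T\{1\}$ (these exist, e.g.\ the Moore--Penrose inverse). Using $SS^{(1)}S=S$ and $TT^{(1)}T=T$ we get
$$SS^{(1)}WT^{(1)}T=SS^{(1)}SLTT^{(1)}T=SLT=W.$$
So the criterion holds, in fact for every choice of inner inverses.

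\emph{Every matrix of the stated form is a solution.} Fix inner inverses with $SS^{(1)}WT^{(1)}T=W$ and let $L=S^{(1)}WT^{(1)}+Z-S^{(1)}SZTT^{(1)}$. Then
$$SLT=SS^{(1)}WT^{(1)}T+SZT-SS^{(1)}S\,Z\,TT^{(1)}T.$$
The first term equals $W$ by the criterion. The last term equals $SZT$ because $SS^{(1)}S=S$ and $TT^{(1)}T=T$, so it cancels the middle term. Hence $SLT=W$.

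\emph{Every solution is of the stated form.} Let $L$ be any solution and choose $Z=L$. Since $SLT=W$,
$$S^{(1)}WT^{(1)}+L-S^{(1)}SLTT^{(1)}=S^{(1)}WT^{(1)}+L-S^{(1)}WT^{(1)}=L.$$
This is the key step: substituting the solution itself as the free parameter makes the correction term cancel the particular solution exactly.

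No step is a real obstacle; the proof only uses the defining identities of inner inverses. The one point that needs care is the dimensions. Here $S^{(1)}\in\CC^{r\times\ell}$ and $T^{(1)}\in\CC^{q\times p}$, so $L$, $Z$, $S^{(1)}WT^{(1)}$ and $S^{(1)}SZTT^{(1)}$ all lie in $\CC^{r\times p}$, and every product above is conformable.
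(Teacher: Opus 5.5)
Your proof is correct and complete. The particular solution $S^{(1)}WT^{(1)}$, the cancellation via $SS^{(1)}S=S$ and $TT^{(1)}T=T$, and the choice $Z=L$ are all valid, and the paper does not prove this result itself but cites Penrose and Ben-Israel--Greville, where your argument is essentially the standard proof.
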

	Specially, recall that there exists one and only one inner inverse $S^\dagger\in S\{1\}$ of $S$ which satisfies all four Penrose equations:
	$$(1)\;SS^\dagger S=S,\;(2)\;S^\dagger SS^\dagger=S^\dagger,\;(3)\;(SS^\dagger)^*=SS^\dagger,\;(4)\;(S^\dagger S)^*=S^\dagger S,$$
	where $M^*$ denotes the usual conjugate transpose of $M$. That unique inner inverse is called the Moore-Penrose inverse of $S$, and is traditionally denoted by $S^\dag$. If $S$ is nonsingular, then $S^\dag=S^{-1}$. For more on the generalized inverses of matrices we refer to the book \cite{BiG} and the references therein. In particular, note that for the Jordan matrix $J_2(0)$ we have
	$$J_2(0)^\dag=\left[\begin{array}{cc}
		0 & 1\\
		0 & 0
	\end{array}\right]^\dag = \left[\begin{array}{cc}
		0 & 0\\
		1 & 0
	\end{array}\right]=J_2(0)^T.$$

	We recall the following properties which will be called upon later:
	
	\begin{proposition}\label{Kron}
		Let $S,T,C,D$ be complex matrices of conformable dimensions. Then:
		\begin{itemize}
			\item[i)] $S\otimes (T+C)=S\otimes T+S\otimes C,\;(T+C)\otimes S=T\otimes S+C\otimes S$.
			\item[ii)] (mixed-product property) $(S\otimes T)(C\otimes D)=(SC)\otimes (TD)$.
			\item[iii)] (Moore-Penrose inverse of the Kronecker product) $(S\otimes T)^\dag = S^\dag \otimes T^\dag$.
		\end{itemize}
	\end{proposition}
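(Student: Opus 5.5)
The three items are standard, and I will check each one directly from the block definition $S\otimes T=[s_{ij}T]$ together with the defining equations of the Moore--Penrose inverse.

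For i), take $S=[s_{ij}]$. The $(i,j)$ block of $S\otimes(T+C)$ is $s_{ij}(T+C)=s_{ij}T+s_{ij}C$, which is the $(i,j)$ block of $S\otimes T+S\otimes C$. For the second identity, write $T=[t_{ij}]$ and $C=[c_{ij}]$. The $(i,j)$ block of $(T+C)\otimes S$ is $(t_{ij}+c_{ij})S=t_{ij}S+c_{ij}S$, and this proves the second identity.

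For ii), let $S$ be $\ell\times r$, $C$ be $r\times m$, and choose $T,D$ so that $TD$ is defined. The $(i,k)$ block of $(S\otimes T)(C\otimes D)$ is computed by block multiplication:
$$\sum_{j=1}^r (s_{ij}T)(c_{jk}D)=\Big(\sum_{j} s_{ij}c_{jk}\Big)TD=(SC)_{ik}\,TD.$$
This is exactly the $(i,k)$ block of $(SC)\otimes(TD)$. The only thing to check is that the block sizes are conformable, and this follows from the conformability of $S,C$ and of $T,D$.

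For iii), I will use the uniqueness of the matrix satisfying the four Penrose equations, which was recalled above. It is therefore enough to verify that $G:=S^\dagger\otimes T^\dagger$ satisfies (1)--(4) for $S\otimes T$. Applying ii) twice gives
$$(S\otimes T)G(S\otimes T)=(SS^\dagger S)\otimes(TT^\dagger T)=S\otimes T,$$
and in the same way $G(S\otimes T)G=G$. For (3) and (4) I need the identity $(S\otimes T)^*=S^*\otimes T^*$. It follows from the definition: the $(j,i)$ block of the conjugate transpose of $[s_{ij}T]$ is $\overline{s_{ij}}\,T^*$. Then
$$\big((S\otimes T)G\big)^*=(SS^\dagger\otimes TT^\dagger)^*=(SS^\dagger)^*\otimes(TT^\dagger)^*=SS^\dagger\otimes TT^\dagger=(S\otimes T)G,$$
and the argument for $G(S\otimes T)$ is the same. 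By uniqueness, $G=(S\otimes T)^\dagger$.

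None of these steps is a real obstacle. The only point that needs care is the dimension bookkeeping in ii), together with the auxiliary identity $(S\otimes T)^*=S^*\otimes T^*$ needed in iii). Once these are in place, everything reduces to the uniqueness of the Moore--Penrose inverse.
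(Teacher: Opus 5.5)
Your proof is correct in all three parts. The paper gives no proof of this proposition; it only recalls these as standard facts (cf.\ \cite{BiG}). Your argument is the standard one: a blockwise comparison for i) and ii), and for iii) a check of the four Penrose equations for $S^\dagger\otimes T^\dagger$ via the mixed-product property and $(S\otimes T)^*=S^*\otimes T^*$, followed by uniqueness of the Moore--Penrose inverse.
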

	
	\section{Solutions with Respect to a Commutator}
	Recall that for the equation \eqref{YBME}, a solution $X$ is said to be commuting if $AX=XA$, and non-commuting otherwise. It has been noted that commuting solutions to YBME behave quite differently compared to the non-commuting ones, see e. g.  \cite{DinDjor},  \cite{NCDBDD2}, and numerous references therein. This remark inspired us to proceed with the approach presented in this paper: the main idea for solving \eqref{YBME} is to introduce another matrix  $Y\in\CC^{N\times N}$ and to seek those solutions which in addition satisfy the commutator relation
	\begin{equation}
		\label{Y} Y=AX-XA.
	\end{equation}
	Such solutions will be referred to as $Y-$based solutions, and they define one family in the solution set, where each member satisfies \eqref{YBME} and \eqref{Y}. Precisely, if
	$$\mathcal{S}_A=\{X\in\CC^{N\times N}: AXA=XAX\}$$
	then 
	$$\mathcal{S}_A(Y)=\{X\in\mathcal{S}_A: AX-XA=Y\}.$$
	Varying the commutator matrix $Y$ offers numerous families of parametric solutions to \eqref{YBME}, where each family corresponds to one specific commutator $Y$. Note that these families are not affine (nor linear) spaces, since the problem is non-additive and nonlinear, see \cite{DinDjor} and \cite{NCDBDD2}.
	
	We emphasize that, as shown in  \cite{DinDjor} and \cite{NCDBDD2}, one non-commuting solution gives rise to an entire class of non-commuting solutions. Indeed, if $X_0$ is a non-commuting solution to \eqref{YBME} then for any complex function $f$ which is holomorphic in a region that contains the eigenvalues of $A$ and has no zeros in $\sigma(A)$, it follows that $f(A)X_0f(A)^{-1}$ is also a non-commuting solution to \eqref{YBME}. Moreover, if holomorphic functions $f$ and $g$ (holomorphic in a suitable region that contains the eigenvalues of $A$), satisfy $f(z)g(z)=1$ in $\sigma(A)$, then $f(A)X_0g(A)$ also provides a new non-commuting solution. Therefore, finding one non-commuting solution can be thought of as finding the initial step (the ``intellectual guess'') of a certain procedure, which will generate entire classes of non-commuting solutions. This is the main reason why we are primarily focused on providing (at least one) non-commuting solution. With this in mind, the next result immediately follows:
	\begin{corollary}
		Let $X_0$ be a non-commuting solution to \eqref{YBME}, and let $Y_0:=AX_0-X_0A$. The following claims are true:
		\begin{itemize}
			\item[(a)] If $f$ is a complex function which is holomorphic in a region that contains $\sigma(A)$ and which has no zeros in $\sigma(A)$, then
			$$f(A)X_0f(A)^{-1}\in\mathcal{S}_A\left(f(A)^{-1}Y_0f(A)\right).$$
			\item[(b)]  If $f$ and $g$ are holomorphic functions in a region that contains the eigenvalues of $A$ and satisfy $f(z)g(z)=1$ in $\sigma(A)$, then 
			$$f(A)X_0g(A)\in\mathcal{S}_A\left(g(A)Y_0f(A)\right).$$
		\end{itemize}
	\end{corollary}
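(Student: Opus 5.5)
The plan is to prove (b) first and obtain (a) as the special case $g=1/f$. Set $F=f(A)$ and $G=g(A)$. Since $fg=1$ on $\sigma(A)$, the holomorphic functional calculus gives $FG=GF=I$, so $G=F^{-1}$. Both $F$ and $G$ are power series (or Cauchy integrals) in $A$, so each commutes with $A$ and with every function of $A$. For (a), $f$ has no zeros on $\sigma(A)$, so $g:=1/f$ is holomorphic near $\sigma(A)$ and $f(A)^{-1}=g(A)$. Part (a) is therefore (b) with this choice of $g$, and the target commutator $g(A)Y_0f(A)$ becomes $f(A)^{-1}Y_0f(A)$, which is exactly the one stated in (a).

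\textbf{Step 1: $X:=FX_0G$ lies in $\mathcal{S}_A$.} Using only $GF=I$ and the commutation of $F,G$ with $A$,
$$XAX=FX_0GAFX_0G=FX_0A(GF)X_0G=F(X_0AX_0)G=F(AX_0A)G=AFX_0GA=AXA.$$

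\textbf{Step 2: the commutator.} Pulling $A$ through $F$ and $G$ gives
$$AX-XA=F(AX_0-X_0A)G=FY_0G.$$
Membership in $\mathcal{S}_A(g(A)Y_0f(A))$, as worded, then requires the identity $FY_0G=GY_0F$. Multiplying this identity on the left and on the right by $F$, and using $FG=I$, shows it is equivalent to $F^2Y_0=Y_0F^2$. In other words, $Y_0$ must commute with $f(A)^2$.

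\textbf{Step 3: the main obstacle, establishing $[f(A)^2,Y_0]=0$.} This is the step I expect to be hardest, because it is not automatic. My first attempt is to extract it from the equation. From $AX_0A=X_0AX_0$ one computes $AY_0A=A^2X_0A-AX_0A^2$ and relations such as $X_0Y_0X_0=Y_0A X_0\cdots$. I will test whether these force $Y_0$ to commute with $A^2$, in which case it also commutes with $p(A^2)$ for every polynomial $p$, and in particular with $f(A)^2$, since $f^2$ agrees on $\sigma(A)$ (with derivatives) with an interpolating polynomial.

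If that fails, I will test a small example, e.g. $A=\operatorname{diag}(1,2)$ with a non-commuting solution $X_0$ and $f(z)=z$. If $FY_0G\neq GY_0F$ there, then the statement as worded holds only under the additional hypothesis $[f(A)^2,Y_0]=0$, which covers $f$ even in $A$ or $f^2$ constant on $\sigma(A)$. Otherwise the commutator should read $f(A)Y_0g(A)$, and I would record that correction explicitly. In either case Steps 1 and 2 are rigorous, and the precise condition under which the stated commutator is the correct one is identified.
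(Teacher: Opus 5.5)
\textbf{The statement as worded is false, so Step 3 cannot be completed.} Your Steps 1 and 2 are correct, and they are essentially the whole ``immediate'' argument (the paper gives no proof beyond saying the corollary ``immediately follows''). Step 2 already shows the problem. The commutator of $f(A)X_0g(A)$ is $f(A)Y_0g(A)$, and in (a) it is $f(A)Y_0f(A)^{-1}$, not $f(A)^{-1}Y_0f(A)$. The two agree only when $[f(A)^2,Y_0]=0$, and this fails even inside the square-cyclic class. Take $A=\operatorname{diag}(1,-1)$, so that $A^4=A^2$, together with
\[
X_0=\frac12\begin{bmatrix}-1&1\\3&1\end{bmatrix},\qquad Y_0=AX_0-X_0A=\begin{bmatrix}0&1\\-3&0\end{bmatrix},\qquad f(z)=z+2,\quad F=f(A)=\operatorname{diag}(3,1).
\]
One checks directly that $AX_0A=X_0AX_0$ and that $X_0$ is non-commuting. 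For $X=FX_0F^{-1}$,
\[
AX-XA=FY_0F^{-1}=\begin{bmatrix}0&3\\-1&0\end{bmatrix}\neq\begin{bmatrix}0&1/3\\-9&0\end{bmatrix}=F^{-1}Y_0F .
\]
With $g=1/f$, the same example refutes (b). The right conclusion is your fallback: what Steps 1--2 actually prove is $f(A)X_0g(A)\in\mathcal{S}_A\bigl(f(A)Y_0g(A)\bigr)$ and $f(A)X_0f(A)^{-1}\in\mathcal{S}_A\bigl(f(A)Y_0f(A)^{-1}\bigr)$. State that correction outright rather than trying to establish $[f(A)^2,Y_0]=0$.

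\textbf{Drop your first route in Step 3; it rests on a false implication.} From $[Y_0,A^2]=0$ you only get commutation with polynomials in $A^2$. The interpolating polynomial of $f^2$ is a polynomial in $z$, not in $z^2$. In the example above $A^2=I$, so $Y_0$ commutes with $A^2$ trivially, yet $f(A)^2=\operatorname{diag}(9,1)$ does not commute with $Y_0$. That route would have ``proved'' a false claim. For the same reason, evenness of $f$ does not by itself give $[f(A)^2,Y_0]=0$.

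\textbf{A smaller issue: $f(A)g(A)=I$ does not follow from $fg=1$ pointwise.} Your claim that $fg=1$ on $\sigma(A)$ gives $f(A)g(A)=I$ is wrong for non-diagonalizable $A$. For example, $A=J_2(0)$, $f=1+z$, $g=1$ gives $f(A)g(A)=I+A$. You need $fg\equiv 1$ near $\sigma(A)$, or at least to the order of the Jordan blocks. Under $A^n=tA^2$, Step 1 survives anyway. There $A_2$ is diagonalizable and $A_1^2=0$, which forces $A\,f(A)g(A)=A$, and that identity is all Step 1 uses. For general $A$ the pointwise hypothesis genuinely breaks (b): with $A=J_3(0)$, $X_0=E_{11}+E_{33}$ (matrix units), $f=1+z$, $g=1$, one gets $f(A)X_0\notin\mathcal{S}_A$.
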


	Surprisingly enough, while all the solutions obtained within this manuscript will be expressed in terms of some $Y,$ throughout the manuscript it is not imperative to assume that $Y\neq0$. In other words, all calculations conducted within this article automatically hold even when $Y=0$, although this case is not that interesting given that all commuting solutions to \eqref{YBME} are accounted for (see Subsection \ref{commuting} below). 
	
	However, the commutator $Y$ cannot be chosen completely arbitrarily: the identity operator, $Y=I$, is \emph{never} a commutator of two matrices. Moreover, while commuting solutions always exist, the existence of non-commuting solutions to YBME is not a given (see \cite{DinDjor} and \cite{NCDBDD2}). Therefore, solving \eqref{YBME}--\eqref{Y} simultaneously is not always possible. Some necessary consistency conditions are provided by the following lemma:
	\begin{lemma}[Necessary conditions for $Y$]\label{necesY}
		Let $A$ be a square matrix such that $A^n=tA^2$ holds for some $n>2$ and $t\neq0$. Let $X$ and $Y$ be provided such that \eqref{Y} is satisfied. Then the following equality holds:
		\begin{equation}
			\label{tY}tAYA=\sum_{s=1}^{n-1}A^sYA^{n-s}.
		\end{equation}
	\end{lemma}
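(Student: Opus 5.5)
The identity \eqref{tY} should follow from the commutator relation \eqref{Y} together with the square-cyclic property \eqref{sqcA}; I do not expect to need the equation \eqref{YBME} itself. The plan is to substitute $Y=AX-XA$ into the right-hand side of \eqref{tY}, observe that the sum telescopes, and then use $A^n=tA^2$ on the two boundary terms that survive.

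\textbf{Step 1: expand the sum.} For each $s=1,\dots,n-1$ we have
$$A^sYA^{n-s}=A^{s+1}XA^{n-s}-A^sXA^{n-s+1}.$$
In the first family, reindex with $j=s+1$, so $j$ runs over $2,\dots,n$ and the term is $A^jXA^{n+1-j}$. The second family is $A^sXA^{n+1-s}$ with $s=1,\dots,n-1$. Both families are therefore made of terms $A^kXA^{n+1-k}$. All terms with $2\le k\le n-1$ occur once with each sign and cancel. This leaves
$$\sum_{s=1}^{n-1}A^sYA^{n-s}=A^nXA-AXA^n.$$

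\textbf{Step 2: apply the square-cyclic property.} Replacing $A^n$ by $tA^2$ on both sides gives
$$A^nXA-AXA^n=tA^2XA-tAXA^2=tA(AX-XA)A=tAYA,$$
which is exactly \eqref{tY}.

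The only point needing care is the index bookkeeping in the telescoping step, in particular checking that the surviving boundary terms are $A^nXA$ and $AXA^n$. Everything else is direct substitution. The hypothesis $n>2$ only ensures the sum is nontrivial. The argument works verbatim for $n=2$, where both sides reduce to $AYA$ with $t=1$.
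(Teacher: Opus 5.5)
Your proof is correct and follows the paper's argument: both show that $A^nXA-AXA^n$ equals $tAYA$ via $A^n=tA^2$, and also equals the sum in \eqref{tY} by telescoping; the paper phrases the telescoping as the commutator expansion $A^{m}(AXA)-(AXA)A^{m}=\sum_{s=0}^{m-1}A^{s}(AYA)A^{m-1-s}$ with $m=n-1$, whereas you telescope directly in $X$. In your side remark on $n=2$, note that $t=1$ is forced only when $A^2\neq 0$; if $A^2=0$ then $AYA=0$ and the identity holds for every $t$.
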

	\begin{proof}
		From \eqref{Y} we have
		\begin{eqnarray}
			\label{AYA}
			\begin{aligned}
				AX-XA=Y &\Rightarrow A^2XA-AXA^2=AYA\\
				&\Leftrightarrow A(AXA)-(AXA)A=AYA.
			\end{aligned}
		\end{eqnarray}
		But then it follows, by a standard commutator expansion, that for any $m\in\NN$:
		\begin{equation}
			\label{mSylAXA}
			A^m(AXA)-(AXA)A^m=\sum_{s=0}^{m-1}A^s\left(AYA\right)A^{m-1-s}.
		\end{equation}
		On the other hand, invoking $A^n=tA^2$ gives
		\begin{eqnarray}
			\label{tA2n}
			\begin{aligned}
				A^2XA-AXA^2=AYA\Leftrightarrow & A^nXA-AXA^n=tAYA\\
				\Leftrightarrow &A^{n-1}AXA-AXAA^{n-1}=tAYA.
			\end{aligned}
		\end{eqnarray}
		By choosing $m:=n-1$ we obtain from \eqref{mSylAXA} and \eqref{tA2n}:
		$$
		tAYA=\sum_{s=0}^{n-2}A^{s+1}YA^{n-(1+s)}=\sum_{s=1}^{n-1}A^sYA^{n-s}.
		$$
	\end{proof}

	\subsection{Reduced System of Matrix Equations}
	
	Throughout the paper we are working with the following:\\
	
	\noindent\textbf{Assumption} $\left(\mathcal{A}_1\right)$: The matrix $A$ is given such that \eqref{sqcA} holds, and a matrix $Y$ is fixed.\\
	
	\noindent\textbf{Problem} ($\mathcal{P}_1$): Under the assumption $(\mathcal{A}_1)$ find all (if any) matrices $X$ which solve \eqref{YBME} and subject to \eqref{Y}.\\
	
	In order to solve ($\mathcal{P}_1$) we start by rewriting the initial equation \eqref{YBME} in terms of a system of nonlinear equations which subject to some natural linear constraints. We utilize some very convenient tools from operator theory, generalized inverses and Sylvester equations. Given the square-cyclic condition \eqref{sqcA} that is imposed on the matrix $A$, it is natural to proceed with the following decomposition:
	
	\begin{theorem}[Core-Nilpotent Decomposition] \label{core-nilpotent}
		Let $A$ be a square matrix of size $N \times N$ with complex entires. If $A^n=tA^2$ holds, for some natural $n>2$ and $t\neq0$, then the space $\CC^N$ decomposes into
		\begin{equation}\label{CN} \CC^N=\N(A^2)\oplus\R(A^2)\end{equation}
		and there exists an invertible matrix $Q\in\mathbb{C}^{N\times N}$ such that
		\begin{equation}
			\label{cnA}
			Q^{-1}AQ = 
			\begin{bmatrix}
				A_1 & 0 \\
				0 & A_2
			\end{bmatrix}:\left[\begin{array}{c}\N(A^2)\\ \R(A^2)\end{array}\right]\to\left[\begin{array}{c}\N(A^2)\\ \R(A^2)\end{array}\right]
		\end{equation}
		where $N_1=\dim \N(A^2)$ and $A_1\in\mathbb{C}^{N_1\times N_1}$ is a nilpotent matrix in $\N(A^2)$ of the nilpotency index at most $2$, $N_2=\dim \R(A^2)$ and $A_2\in\mathbb{C}^{N_2\times N_2}$ is an invertible matrix in $\R(A^2)$ (of course, $N=N_1+N_2$). 
	\end{theorem}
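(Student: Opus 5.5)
The plan is to show first that the index of $A$ is at most $2$, i.e.\ that $\N(A^2)=\N(A^3)$ and $\R(A^2)=\R(A^3)$. The decomposition \eqref{CN} and the block form \eqref{cnA} then follow from the standard Fitting-type argument.

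\textbf{Step 1: the ranges stabilize.} Since $n>2$, we have $n-2\ge 1$, so $A^n=A^{n-2}A^2$ and hence $\R(A^n)\subseteq \R(A^3)\subseteq\R(A^2)$. On the other hand, $t\neq0$ gives $A^2=t^{-1}A^n$, so $\R(A^2)\subseteq\R(A^n)$. Therefore
$$\R(A^2)=\R(A^3)=\R(A^n).$$

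\textbf{Step 2: the null spaces stabilize.} By rank--nullity this gives $\dim\N(A^2)=\dim\N(A^3)$. Together with the trivial inclusion $\N(A^2)\subseteq\N(A^3)$, we get $\N(A^2)=\N(A^3)$.

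\textbf{Step 3: the direct sum \eqref{CN}.} Let $v\in\N(A^2)\cap\R(A^2)$ and write $v=A^2w$. Then $A^4w=0$. Iterating $\N(A^{k+1})=\N(A^k)$ for $k\ge2$ (if $A^{k+1}u=0$, then $A^{k-2}u\in\N(A^3)=\N(A^2)$, so $A^ku=0$), we get $w\in\N(A^2)$, hence $v=0$. The dimensions add up to $N$ by rank--nullity, so $\CC^N=\N(A^2)\oplus\R(A^2)$.

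\textbf{Step 4: the block form \eqref{cnA}.} Both subspaces are $A$-invariant. Indeed, $A^2(Au)=A(A^2u)=0$ for $u\in\N(A^2)$, and $A(A^2w)=A^2(Aw)$. Choose $Q$ whose columns are a basis of $\N(A^2)$ followed by a basis of $\R(A^2)$. Then $Q^{-1}AQ$ is block diagonal with blocks $A_1,A_2$ of sizes $N_1=\dim\N(A^2)$ and $N_2=\dim\R(A^2)$.

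\textbf{Step 5: properties of the blocks.} We have $A_1^2=0$ because $A^2$ vanishes on $\N(A^2)$, so $A_1$ is nilpotent of index at most $2$. For $A_2$: the restriction of $A$ maps $\R(A^2)$ onto $A\R(A^2)=\R(A^3)=\R(A^2)$. A surjective map on a finite-dimensional space is injective, so $A_2$ is invertible. Alternatively, $A_2^n=tA_2^2$ with $A_2$ injective, since $\N(A)\cap\R(A^2)\subseteq\N(A^2)\cap\R(A^2)=\{0\}$.

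\textbf{Main obstacle.} The only genuinely nontrivial step is extracting the index bound from $A^n=tA^2$. It hinges on $t\neq0$, which makes $A^2$ a multiple of a higher power, and on $n>2$. Everything afterwards is the routine core-nilpotent argument.
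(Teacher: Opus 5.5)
Your proof is correct, but it takes a more self-contained route than the paper. The paper cites the general core-nilpotent decomposition: the index $p$ equals both the ascent and the descent, $\CC^N=\N(A^p)\oplus\R(A^p)$, $A_1$ is nilpotent of index at most $p$, and $A_2$ is invertible. It then only checks that $p\le 2$, via the collapsing chain $\N(A^2)\subseteq\N(A^3)\subseteq\cdots\subseteq\N(A^n)=\N(tA^2)=\N(A^2)$.

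You instead get the index bound from the ranges ($\R(A^2)=\R(A^3)$, then rank--nullity for the null spaces). You then rebuild the Fitting decomposition by hand for this case. Your iteration in Step 3 is exactly the \emph{stabilization persists} part of the general theorem that the paper outsources. Step 5 gives a direct reason for the invertibility of $A_2$: $A$ maps $\R(A^2)$ onto $\R(A^3)=\R(A^2)$. The paper's version is shorter because it delegates all of this to a textbook result. Yours is elementary and checks every claim of the statement without citation.

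One cosmetic fix in Step 1: the inclusion $\R(A^n)\subseteq\R(A^3)$ comes from $A^n=A^3A^{n-3}$, using $n\ge 3$. It does not follow from the factorization $A^n=A^{n-2}A^2$ that you wrote.
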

	
	\begin{proof}
		The Core-Nilpotent Decomposition of any square matrix (see e.g. \cite{BiG}) states that there exists a positive integer $p\geq1$ which is equal to both the ascend and the descend of the operator $A$. Respectively, the space $\CC^N$ decomposes into the direct sum $\CC^N=\N(A^p)\oplus\R(A^p)$, and there exists an invertible matrix $Q$ such that
		$$Q^{-1}AQ=\mat{A_1}{0}{0}{A_2}:\left[\begin{array}{c}\N(A^p)\\\R(A^p)\end{array}\right]\to \left[\begin{array}{c}\N(A^p)\\\R(A^p)\end{array}\right],$$
		with $A_2$ being invertible in $\R(A^p)$, while $A_1$ is nilpotent in $\N(A^p)$ with the nilpotency index being at most $p$. On the other hand, from $A^n=tA^2$ it follows that
		$$\N(A)\subseteq\N(A^2)\subseteq\N(A^3)\subseteq\ldots\subseteq\N(A^n)=\N(tA^2)=\N(A^2),$$
		proving that $p\leq 2$.
	\end{proof}
	For the rest of the manuscript, the letters $N_1$ and $N_2$ will respectively denote the dimensions $N_1=\dim\N(A^2)$, and $N_2=\dim\R(A^2)$, where we point out that $N=N_1+N_2$.

	Note that the block-diagonal form of the nilpotent matrix $A_1$ (with $A_1^2=0$) may contain Jordan blocks $J_2(0)$ as well as $J_1(0)$. It is clear that at least one $J_2(0)$ block is present only if $A_1\neq 0$.
	
	It is straightforward to see that the matrix $A_2$ from \eqref{cnA} also satisfies the square-cyclic condition $A_2^n=tA_2^2$, and that $\sigma(A_2)=\sigma(A)\setminus\{0\}$. Since $A_2$ is invertible, the latter is equivalent to $A_2^{n-2}=t I$, where $I$ is the identity operator on $\R(A^2)$.

	By \cite{DinDjor}, we can without loss of generality rearrange the basis vectors in a such manner, that the matrix $A$ has the form \eqref{cnA}. With respect to \eqref{CN}, 
	we put
	\begin{equation}
		\label{cnX}
		X = \begin{bmatrix} X_1 & X_2\\ X_3 & X_4 \end{bmatrix}:\left[\begin{array}{c}\N(A^2)\\ \R(A^2)\end{array}\right]\to\left[\begin{array}{c}\N(A^2)\\ \R(A^2)\end{array}\right].\end{equation}
	Then $X$ solves \eqref{YBME} if and only if the following system is consistent for $X_1$, $X_2$, $X_3$ and $X_4$:
	\begin{equation}
		\label{system}
		\begin{cases}
			A_1 X_1 A_1 = X_1 A_1 X_1 + X_2 A_2 X_3\\
			A_1 X_2 A_2 = X_1 A_1 X_2 + X_2 A_2 X_4\\
			A_2 X_3 A_1 = X_3 A_1 X_1 + X_4 A_2 X_3\\
			A_2 X_4 A_2 = X_3 A_1 X_2 + X_4 A_2 X_4.
	\end{cases}\end{equation}  
	On the other hand, for any given matrix $Y$ we have
	\begin{equation}
		\label{Ymat}
		Y=\mat{Y_1}{Y_2}{Y_3}{Y_4}:\left[\begin{array}{c}\N(A^2)\\ \R(A^2)\end{array}\right]\to\left[\begin{array}{c}\N(A^2)\\ \R(A^2)\end{array}\right].
	\end{equation}  
	Thus solving Problem $(\mathcal{P}_1)$ boils down to solving the system \eqref{system} while maintaining the following matrix equality
	\begin{equation}
		\label{AXXAYmat}
		\mat{Y_1}{Y_2}{Y_3}{Y_4}=\mat{A_1X_1-X_1A_1}{A_1X_2-X_2A_2}{A_2X_3-X_3A_1}{A_2X_4-X_4A_2},\end{equation}  or, equivalently, solving Problem $(\mathcal{P}_1)$ reduces to solving \eqref{system} while simultaneously solving the system of Sylvester equations
	\begin{equation}
		\label{Sylsystem}
		\begin{cases}
			A_1X_1-X_1A_1=Y_1\\
			A_1X_2-X_2A_2=Y_2\\
			A_2X_3-X_3A_1=Y_3\\
			A_2X_4-X_4A_2=Y_4.
		\end{cases}
	\end{equation}
	This additional condition can simplify the problem in the following sense. The matrix $A_1$ is nilpotent ergo $\sigma(A_1)=\{0\}$. Similarly, the matrix $A_2$ is invertible and corresponds to the core part of the matrix $A$, therefore $\sigma(A_2)=\sigma(A)\setminus\{0\}$. In other words, $\sigma(A_1)\cap\sigma(A_2)=\emptyset$, and the second and the third equation in the system \eqref{Sylsystem} are regular Sylvester equations, meaning that for any given $Y_2$, $Y_3$, there exist unique $X_2$ and $X_3$ that satisfy these equalities respectively. Since $A_1^2=0$ it follows that the matrices $X_2$ and $X_3$ are respectively calculated using Theorem \ref{nil_inv_Syl} below.
	
	\begin{theorem}\label{nil_inv_Syl} Let $L\in\CC^{\ell\times\ell}$ and $T\in\CC^{r\times r}$ be complex square matrices, and let $D\in\CC^{r\times\ell}$ be a rectangular matrix. The following statements are true:
		\begin{itemize}
			\item[i)] Let $T$ be invertible and let $L$ be a nilpotent matrix with $L^2=0$. The Sylvester equation $TZ-ZL=D$ is consistent and its solution is
			$$Z=T^{-1}(D+T^{-1}DL).$$
			\item[ii)] Let $L$ be invertible and let $T$ be a nilpotent matrix with $T^2=0$. The Sylvester equation $TZ-ZL=D$ is consistent and its solution is
			$$Z=-(D+TDL^{-1})L^{-1}.$$
		\end{itemize}
	\end{theorem}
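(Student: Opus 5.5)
Both parts reduce to two facts: a Sylvester equation $TZ-ZL=D$ has at most one solution when $\sigma(T)\cap\sigma(L)=\emptyset$, and the proposed formula can be checked to satisfy the equation directly using $L^2=0$ (resp. $T^2=0$). Uniqueness is standard, and it also follows from the verification step, as explained below. So the plan is: verify the formula by substitution, then note uniqueness.

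For part i), set $Z=T^{-1}D+T^{-2}DL$. Then
$$TZ=D+T^{-1}DL,\qquad ZL=T^{-1}DL+T^{-2}DL^2=T^{-1}DL,$$
since $L^2=0$. Subtracting gives $TZ-ZL=D$, so the equation is consistent.

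For uniqueness, suppose $TW-WL=0$. Then $W=T^{-1}WL$, and iterating gives $W=T^{-2}WL^2=0$. Hence the solution is unique; this is the homogeneous Sylvester equation with disjoint spectra, $\sigma(T)\not\ni 0$ and $\sigma(L)=\{0\}$.

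For part ii), set $Z=-DL^{-1}-TDL^{-2}$. Then
$$TZ=-TDL^{-1}-T^2DL^{-2}=-TDL^{-1},\qquad ZL=-D-TDL^{-1}.$$
Subtracting gives $TZ-ZL=-TDL^{-1}+D+TDL^{-1}=D$.

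For uniqueness, suppose $TW=WL$. Then $W=TWL^{-1}=T^2WL^{-2}=0$.

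There is no real obstacle. The only care needed is the placement of the inverses. In i) the invertible factor $T$ acts on the left and $L$ on the right, so the correction term must be $T^{-2}DL$. In ii) the correction term must be $TDL^{-2}$. One should also confirm that the dimensions are compatible: $Z$ and $D$ are both $r\times\ell$. One could derive the formulas instead by truncating the Neumann-type series $Z=\sum_k T^{-k-1}DL^k$, which terminates after two terms because $L^2=0$, but direct verification suffices.
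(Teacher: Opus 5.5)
Your proof is correct and is essentially the paper's argument. The paper rewrites the equation as $Z=T^{-1}(D+ZL)$, substitutes it into itself once and lets $L^2=0$ kill the remainder, which is your uniqueness iteration with $D$ carried along; your direct substitution additionally checks that the formula actually solves the equation (the paper's derivation only shows that any solution must equal it, leaving existence to the disjoint-spectra theory of regular Sylvester equations), and you also write out part ii), which the paper omits.
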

	\begin{proof}
		We prove only the first statement. The matrix $T$ is invertible, so the Sylvester equation is equivalent to $Z=T^{-1}(D+ZL)$. Further, we have:
		\begin{align*}
			Z &= T^{-1}(D+ZL)\\
			&= T^{-1}\left(D+T^{-1}(D+ZL)L\right)\\
			&=T^{-1}(D+T^{-1}DL),
		\end{align*}
		because of $L^2=0$.
	\end{proof}
	Returning to our problem, by Theorem \ref{nil_inv_Syl} we have
	\begin{equation}
		\label{X2series} X_2=-(Y_2+A_1Y_2A_2^{-1})A_2^{-1},
	\end{equation}
	\begin{equation}
		\label{X3series}
		X_3=A_2^{-1}(Y_3+A_2^{-1}Y_3A_1).
	\end{equation}
	For short, we denote these unique connections \eqref{X2series} and \eqref{X3series} between the matrices $X_2$ and $Y_2$, and respectively between $X_3$ an $Y_3$, by
	\begin{equation}
		\label{X2}X_2:=\widehat{Y_2},
	\end{equation}
	\begin{equation}
		\label{X3}X_3:=\widehat{Y_3}.
	\end{equation}
	Moreover, by exploiting the property \eqref{tY}, we see that some additional conditions are necessary for the matrices $Y_1$ and $Y_4$: 
	
	\begin{lemma}[Necessary conditions for $Y_1$ and $Y_4$]\label{necesY1Y4}
		Let $A$ be a square matrix such that $A_2 ^n=tA_2^2$ holds for some $n>2$ and $t\neq0$. Let $X$ and $Y$ be provided such that \eqref{Y} is satisfied. If the matrices $A$, $X$, and $Y$ are respectively decomposed as \eqref{cnA}, \eqref{cnX}, and \eqref{Ymat}, then the following equalities hold:
		\begin{equation}
			\label{A1Y1=Y1A1}
			A_1Y_1+Y_1A_1=0.
		\end{equation}
		\begin{equation}
			\label{A1Y1A1}
			A_1Y_1A_1=0.
		\end{equation} 
		\begin{equation}
			\label{tY2}tY_4=\sum_{s=0}^{n-2}A_2^sY_4A_2^{n-2-s},
		\end{equation}
		\begin{equation}
			\label{0tY2}0=Y_4+\sum_{s=1}^{n-3}A_2^{s}Y_4A_2^{-s}.
		\end{equation}
	\end{lemma}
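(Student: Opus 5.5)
The plan is to work directly with the block form of \eqref{Sylsystem}, using $A_1^2=0$ and $A_2^{n-2}=tI$; Lemma~\ref{necesY} is not strictly needed.

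\textbf{The equalities for $Y_1$.} The first equation of \eqref{Sylsystem} is $Y_1=A_1X_1-X_1A_1$. Multiplying it on the left by $A_1$ gives $A_1Y_1=-A_1X_1A_1$, because $A_1^2X_1=0$. Multiplying it on the right by $A_1$ gives $Y_1A_1=A_1X_1A_1$, because $X_1A_1^2=0$. Adding the two yields \eqref{A1Y1=Y1A1}. Multiplying $Y_1=A_1X_1-X_1A_1$ on both sides by $A_1$ gives
$$A_1Y_1A_1=A_1^2X_1A_1-A_1X_1A_1^2=0,$$
which is \eqref{A1Y1A1}.

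\textbf{The equalities for $Y_4$.} Start from the fourth equation $Y_4=A_2X_4-X_4A_2$. The standard telescoping commutator expansion (as in \eqref{mSylAXA}) gives
$$A_2^{m}X_4-X_4A_2^{m}=\sum_{s=0}^{m-1}A_2^sY_4A_2^{m-1-s}.$$
Take $m=n-1$ and multiply this identity by $A_2$ on both sides.

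\emph{Left-hand side.} It becomes $A_2^{n}X_4A_2-A_2X_4A_2^{n}$. Since $A_2^n=tA_2^2$, this equals $tA_2(A_2X_4-X_4A_2)A_2=tA_2Y_4A_2$.

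\emph{Right-hand side.} It becomes $\sum_{s=0}^{n-2}A_2^{s+1}Y_4A_2^{n-1-s}$.

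Cancelling the invertible factor $A_2$ on each side gives \eqref{tY2}. A more direct route is to use $A_2^{n-2}=tI$ with $m=n-2$: the left-hand side $A_2^{n-2}X_4-X_4A_2^{n-2}=tX_4-tX_4=0$, so
$$\sum_{s=0}^{n-3}A_2^sY_4A_2^{n-3-s}=0 .$$
This is the relation behind \eqref{0tY2}.

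\textbf{Reaching \eqref{0tY2}.} Right-multiply the last identity by $A_2^{-(n-3)}$, which gives $\sum_{s=0}^{n-3}A_2^{s}Y_4A_2^{-s}=0$. Separating the $s=0$ term gives exactly \eqref{0tY2}. For \eqref{tY2}, I would check consistency with this: the exponents in \eqref{tY2} are $s$ and $n-2-s$, and the terms $s=0$ and $s=n-2$ each equal $tY_4$, since $A_2^{n-2}=tI$. So \eqref{tY2} reads $tY_4=2tY_4+\sum_{s=1}^{n-3}A_2^sY_4A_2^{n-2-s}$, that is, $0=tY_4+\sum_{s=1}^{n-3}A_2^sY_4A_2^{n-2-s}$. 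Dividing by $t$ and using $A_2^{n-2-s}=tA_2^{-s}$ turns this into \eqref{0tY2}. Hence \eqref{tY2} and \eqref{0tY2} are equivalent, and both follow from the telescoped identity with $m=n-2$.

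\textbf{Main obstacle.} The only delicate point is the index bookkeeping. One must check that the sum in \eqref{tY2}, whose top exponent is $n-2$ rather than the $n-1$ of \eqref{tY}, matches what the telescoping produces. If the indexing there turns out to be off, I would present \eqref{0tY2} as the primary identity and derive \eqref{tY2} from it via the equivalence just described.
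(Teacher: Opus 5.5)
Your proof is correct. For $Y_1$ it is the same as the paper's argument.

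For $Y_4$ the route differs. The paper first proves the global identity \eqref{tY} of Lemma~\ref{necesY} and reads off its $(2,2)$ block, $tA_2Y_4A_2=\sum_{s=1}^{n-1}A_2^sY_4A_2^{n-s}$. It then cancels $A_2$ on both sides to get \eqref{tY2}, and uses $A_2^{n-2}=tI$ to reach \eqref{0tY2}. You instead telescope the block Sylvester equation $A_2X_4-X_4A_2=Y_4$ directly, so Lemma~\ref{necesY} is indeed not needed.

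Your $m=n-2$ variant is the most transparent version. It shows that both identities are just the telescoped form of $[A_2^{n-2},X_4]=[tI,X_4]=0$, and it gives \eqref{0tY2} without any cancellation. In the $m=n-1$ computation, multiplying by $A_2$ and then cancelling it is unnecessary. Since $A_2^{n-1}=tA_2$, the left-hand side already equals $tY_4$.

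Your indexing worry in the last paragraph is unfounded. \eqref{tY2} is exactly what the shift $s\mapsto s+1$ produces. Your check that \eqref{tY2} and \eqref{0tY2} are equivalent is the same computation the paper does in its final line.

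The paper's detour through Lemma~\ref{necesY} adds context, not rigor. It shows \eqref{tY2} as the $(2,2)$ block of the global necessary condition \eqref{tY}. Along the way it also shows that the off-diagonal blocks of \eqref{tY} hold automatically, and that the $(1,1)$ block gives \eqref{A1Y1A1} again.
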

	\begin{proof} The first equation follows immediately from 
		$$A_1X_1-X_1A_1=Y_1,$$
		then multiplying this by $A_1$ from the left or from the right, respectively, gives
		$$Y_1A_1=A_1X_1A_1-X_1A_1^2=A_1X_1A_1=-A_1(A_1X_1-X_1A_1)=-A_1Y_1.$$
		This proves \eqref{A1Y1=Y1A1}, and, consequently, \eqref{A1Y1A1} follows immediately. 
		
		On the other hand, we see that the conditions for Lemma \ref{necesY} are met, therefore \eqref{tY} holds. Rewriting $A$, $X$, and $Y$ in terms of \eqref{cnA}, \eqref{cnX}, and \eqref{Ymat}, respectively, we get
		$$tAYA=\mat{tA_1Y_1A_1}{tA_1Y_2A_2}{tA_2Y_3A_1}{tA_2Y_4A_2},$$
		while for any $s\in\{1,\ldots,n-1\}$, the matrix $A^sYA^{n-s}$ is expressed as
		$$A^sYA^{n-s}=\mat{A_1^sY_1A_1^{n-s}}{A_1^sY_2A_2^{n-s}}{A_2^sY_3A_1^{n-s}}{A_2^sY_4A_2^{n-s}}.$$ 
		Equating the two gives
		\begin{equation}
			\label{tYmat}
			\mat{tA_1Y_1A_1}{tA_1Y_2A_2}{tA_2Y_3A_1}{tA_2Y_4A_2}=\sum_{s=1}^{n-1}\mat{A_1^sY_1A_1^{n-s}}{A_1^sY_2A_2^{n-s}}{A_2^sY_3A_1^{n-s}}{A_2^sY_4A_2^{n-s}}.\end{equation}
		We observe each position $(i,j)$ in the above matrix equation, for $i,j\in\{1,2\}$. By equating the positions $(1,1)$ in \eqref{tYmat} we see that $$tA_1Y_1A_1=\sum\limits_{s=1}^{n-1}A_1^sY_1A_1^{n-s}.$$
		Even without \eqref{A1Y1=Y1A1} we still get \eqref{A1Y1A1}: because $n>2$ we have that $\max\{s,n-s: 1\leq s\leq n-1\}>1$, therefore $A_1^sY_1A_1^{n-s}=0$ for every $s\in\{1,\ldots,n-1\}$, thus \eqref{A1Y1A1} holds. Moreover, since the matrix $A_2$ also has the property \eqref{sqcA}, while $A_1^\ell=0$ for any $\ell>1$, we have that
		\begin{align*} 
			tA_1Y_2A_2 &=A_1YA_2^{n-1}+0+0+\ldots+0=\sum_{s=1}^{n-1}A_1^sY_2A_2^{n-s},\\
			tA_2Y_3A_1 &=0+0+\ldots+0+A_2^{n-1}Y_3A_1=\sum_{s=1}^{n-1}A_2^{s}Y_3A_1^{n-s},
		\end{align*} 
		ergo the positions $(2,1)$ and $(1,2)$ always satisfy the respective equalities in \eqref{tYmat}. Finally, from the position $(2,2)$ we see that the condition \eqref{tY} holds for $Y_4$ with respect to $A_2$: $$tA_2Y_4A_2=\sum_{s=1}^{n-1}A_2^{s}Y_4A_2^{n-s}.
		$$
		The matrix $A_2$ is invertible and it immediately follows that \eqref{tY2} holds. Then, utilizing that $A_2^{n-2}=tI$ and $A_2^{n-2-s}=tA_2^{-s}$ one gets the equality \eqref{0tY2}.
	\end{proof}

	The previous analysis shows that solving Problem ($\mathcal{P}_1$) reduces to solving the system
	\begin{equation}
		\label{XYsystem}
		\left(\mathcal{P}_1'\right):\begin{cases}
			A_1 X_1 A_1 = X_1 A_1 X_1 + \widehat{Y_2} A_2\widehat{Y_3}\\
			X_1 A_1 \widehat{Y_2}+\widehat{Y_2} A_2 X_4= A_1 \widehat{Y_2} A_2\\
			\widehat{Y_3} A_1 X_1+X_4 A_2\widehat{Y_3} = A_2\widehat{Y_3} A_1\\
			A_2 X_4 A_2 =  X_4 A_2 X_4+\widehat{Y_3} A_1 \widehat{Y_2},
		\end{cases}
	\end{equation}
	where the matrices $Y_1$ and $Y_4$ satisfy the equations \eqref{A1Y1=Y1A1}--\eqref{0tY2}. Given that $\widehat{Y_2}$ and $\widehat{Y_3}$ are uniquely determined by the matrices $Y_2$ and $Y_3$, we focus on finding the remaining matrices $X_1$ and $X_4$. In order to do so, we are going to break the system \eqref{XYsystem} into the following subproblems: \\
	
	\noindent $1.$ \emph{Inspecting the consistency of the coupled equations:}
	\begin{equation}
		\label{secthird}
		\begin{cases}X_1 A_1 \widehat{Y_2}+\widehat{Y_2} A_2 X_4  = A_1 \widehat{Y_2} A_2,\\
			\widehat{Y_3} A_1 X_1+
			X_4 A_2\widehat{Y_3}= A_2\widehat{Y_3} A_1.
		\end{cases}
	\end{equation}
	This system is linear with respect to its variables $X_1$ and $X_4$. In Section \ref{SYSTEM} below we solve it completely.\\
	
	\noindent $2.$ \emph{Solving the inhomogeneous Yang-Baxter equations independently from one another:} 
	$$\begin{cases} A_1 X_1 A_1 = X_1 A_1 X_1 + \widehat{Y_2} A_2\widehat{Y_3}\\
		A_2 X_4 A_2 =  X_4 A_2 X_4+\widehat{Y_3} A_1 \widehat{Y_2}.
	\end{cases}$$
	Inhomogeneous Yang-Baxter equations $AXA=XAX+B$ are completely new, and to the best of our nowledge, have not been analyzed nor solved anywhere. In Section \ref{first} below we solve this equation where the coefficient matrix $A$ satisfies $A^2=0$. In Section \ref{fourth} below we solve the inhomogeneous Yang-Baxter equation with invertible coefficient matrix $A$. In each of these cases we obtain necessary and sufficeint conditions for the existence of the respective solutions, and we provide techniques for calculating exact solutions. \\
	
	\noindent $3.$ \emph{Finding the matrices $X_1$ and $X_4$ which solve \emph{both} previous points.}
	
	Once the previous two points are solved, and all the matrices $X_1$ and $X_4$ are accounted for, one takes those matrices which satisfy both of these conditions. That way, the consistency of \eqref{XYsystem} is achieved.

	\subsection{Consistency of System \eqref{secthird}}\label{SYSTEM}
	The consistency of \eqref{secthird} is provided by Theorem \ref{coupled} below:
	\begin{theorem}\label{coupled}
		Consistency of the system \eqref{secthird}
		depends on the matrix
		\begin{equation}\label{matrix_A}
			\A=\left[\begin{array}{cc}
				(A_1 \widehat{Y_2})^T\otimes I_{N_1} & I_{N_2}\otimes \widehat{Y_2} A_2\\
				I_{N_1}\otimes \widehat{Y_3} A_1 & (A_2\widehat{Y_3})^T\otimes I_{N_2}
			\end{array}\right].
		\end{equation}
		The system \eqref{secthird} is consistent if and only if there exists an inner inverse for $\mathcal{A}$ such that
		$$\A\A^{-}\left[\begin{array}{c}
			\operatorname{vec}(A_1 \widehat{Y_2} A_2)\\
			\operatorname{vec}(A_2\widehat{Y_3} A_1)
		\end{array}\right]=\left[\begin{array}{c}
			\operatorname{vec}(A_1 \widehat{Y_2} A_2)\\
			\operatorname{vec}(A_2\widehat{Y_3} A_1)
		\end{array}\right],$$
		and in that case the solutions are given via
		$$\left[\begin{array}{c}
			\operatorname{vec}(X_1)\\
			\operatorname{vec}(X_4)
		\end{array}\right]=\A^{-}\left[\begin{array}{c}
			\operatorname{vec}(A_1 \widehat{Y_2} A_2)\\
			\operatorname{vec}(A_2\widehat{Y_3} A_1)
		\end{array}\right]+(I-\A^{-}\A)\left[\begin{array}{c}
			\operatorname{vec}(Z_1)\\
			\operatorname{vec}(Z_4)
		\end{array}\right],$$
		for any $Z_1,Z_4$ of appropriate dimensions.
	\end{theorem}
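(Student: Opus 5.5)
The plan is to rewrite the two coupled linear matrix equations in \eqref{secthird} as one linear system in the stacked unknown $\left[\operatorname{vec}(X_1)^T,\operatorname{vec}(X_4)^T\right]^T$ using the identity $\operatorname{vec}(TLS)=(S^T\otimes T)\operatorname{vec}(L)$. Then I will invoke Theorem \ref{ThmPenrose} with the right-hand factor equal to the $1\times1$ identity.

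\emph{Step 1: dimensions.} By \eqref{cnX}, $X_1\in\CC^{N_1\times N_1}$ and $X_4\in\CC^{N_2\times N_2}$. Since $X_2=\widehat{Y_2}$ maps $\R(A^2)\to\N(A^2)$, we have $\widehat{Y_2}\in\CC^{N_1\times N_2}$ and $\widehat{Y_3}\in\CC^{N_2\times N_1}$. Consequently the first equation of \eqref{secthird} is an identity in $\CC^{N_1\times N_2}$ and the second is an identity in $\CC^{N_2\times N_1}$. This confirms that all products are conformal and that the blocks of $\A$ have the sizes $N_1N_2\times N_1^2$, $N_1N_2\times N_2^2$, $N_1N_2\times N_1^2$ and $N_1N_2\times N_2^2$. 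So $\A$ is a $2N_1N_2\times(N_1^2+N_2^2)$ matrix.

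\emph{Step 2: vectorization.} In the first equation I write $X_1A_1\widehat{Y_2}=I_{N_1}X_1(A_1\widehat{Y_2})$, which gives
$$\operatorname{vec}(X_1A_1\widehat{Y_2})=((A_1\widehat{Y_2})^T\otimes I_{N_1})\operatorname{vec}(X_1).$$
I write $\widehat{Y_2}A_2X_4=(\widehat{Y_2}A_2)X_4I_{N_2}$, which gives $(I_{N_2}\otimes\widehat{Y_2}A_2)\operatorname{vec}(X_4)$. Analogously, in the second equation $\widehat{Y_3}A_1X_1I_{N_1}$ becomes $(I_{N_1}\otimes\widehat{Y_3}A_1)\operatorname{vec}(X_1)$, and $I_{N_2}X_4(A_2\widehat{Y_3})$ becomes $((A_2\widehat{Y_3})^T\otimes I_{N_2})\operatorname{vec}(X_4)$. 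Stacking the two vectorized equations, and using linearity of $\operatorname{vec}$ (Proposition \ref{Kron} i)), shows that \eqref{secthird} is equivalent to
$$\A\left[\begin{array}{c}\operatorname{vec}(X_1)\\ \operatorname{vec}(X_4)\end{array}\right]=w,\qquad w:=\left[\begin{array}{c}\operatorname{vec}(A_1\widehat{Y_2}A_2)\\ \operatorname{vec}(A_2\widehat{Y_3}A_1)\end{array}\right].$$
Since $\operatorname{vec}$ is a bijection, every solution $(X_1,X_4)$ corresponds to a solution vector and conversely.

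\emph{Step 3: Penrose.} I apply Theorem \ref{ThmPenrose} with $S=\A$, $L$ the unknown vector, $T=1$ and $W=w$. Taking $T^{(1)}=1$, the consistency condition becomes $\A\A^-w=w$ for some inner inverse $\A^-$. The general solution is then $\A^-w+z-\A^-\A z$ with $z$ arbitrary. Writing $z=[\operatorname{vec}(Z_1)^T,\operatorname{vec}(Z_4)^T]^T$ yields exactly the stated formula.

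The only point needing care is the phrasing ``for some inner inverse'': as in Theorem \ref{ThmPenrose}, if the condition holds for one inner inverse it holds for all. This is because $w\in\R(\A)$ is equivalent to $\A\A^-w=w$ for any $\A^-\in\A\{1\}$. The genuine obstacle is therefore only bookkeeping, namely getting the Kronecker factor orders and identity sizes right in Step 2. The rest is a direct application of the earlier results.
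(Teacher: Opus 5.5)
Your proposal is correct and follows the paper's own route. You vectorize both equations of \eqref{secthird} via $\operatorname{vec}(TLS)=(S^T\otimes T)\operatorname{vec}(L)$ to get $\A\mathbf{x}=\mathbf{b}$, then apply Theorem \ref{ThmPenrose} with the trivial right factor, which you state explicitly and the paper leaves implicit. One small mis-citation: Proposition \ref{Kron} i) is distributivity of $\otimes$, not linearity of $\operatorname{vec}$, though the latter is trivial and the argument is unaffected.
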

	
	\begin{proof}
		By employing the Kronecker product and the vectorization operator, the system \eqref{secthird}, 
		is rewritten as
		\begin{align*}
			\left(\left(A_1 \widehat{Y_2}\right)^T\otimes I_{N_1}\right)\operatorname{vec}(X_1)+\left(I_{N_2}\otimes \widehat{Y_2} A_2\right)\operatorname{vec}(X_4)&=\operatorname{vec}(A_1 \widehat{Y_2} A_2),\\
			\left(I_{N_1}\otimes \widehat{Y_3} A_1\right)\operatorname{vec}(X_1)+\left(\left(A_2\widehat{Y_3}\right)^T\otimes I_{N_2}\right)\operatorname{vec}(X_4)&=\operatorname{vec}(A_2\widehat{Y_3} A_1).
		\end{align*}
		By introducing $\mathcal{A}$ as in \eqref{matrix_A}, and denoting by 
		$$\mathbf{x}=\left[\begin{array}{c}
			\operatorname{vec}(X_1)\\
			\operatorname{vec}(X_4)
		\end{array}\right],\;\mathbf{b}=\left[\begin{array}{c}
			\operatorname{vec}(A_1 \widehat{Y_2} A_2)\\
			\operatorname{vec}(A_2\widehat{Y_3} A_1)
		\end{array}\right],
		$$
		the system \eqref{secthird} reads
		$$\A \mathbf{x}=\mathbf{b}.$$
		From  Theorem \ref{ThmPenrose} it follows that the latter is solvable for $\mathbf{x}$ if and only if $\mathbf{b}\in\R(\mathcal{A})$, that is, if and only if there exists an $\mathcal{A}^-$ such that $\A\A^-\mathbf{b}=\mathbf{b}$.  In that case its general solution is
		$$\mathbf{x}=\A^-\mathbf{b}+(I-\A^-\A)\mathbf{z},$$
		for any vector $\mathbf{z}\in\mathbb{C}^{N_1^2+N_2^2}$. Since the vectorization operator is injective, for every such $\mathbf{z}$ there exist unique matrices $Z_1\in\CC^{N_1\times N_1}$ and $Z_4\in\CC^{N_2\times N_2}$ which determine the vector $\mathbf{z}$ uniquely. Since $\mathbf{z}$ is arbitrary, then so are the matrices $Z_1$ and $Z_4$.
		
	\end{proof}

	\section{The Equation $A_1X_1A_1=X_1A_1X_1+\widehat{Y_2}A_2\widehat{Y_3}$}\label{first}
	In this section we focus on the first equation from \eqref{XYsystem}, that is
	\begin{equation}
		\label{X123}
		A_1 X_1 A_1 - X_1 A_1 X_1 = \widehat{Y_2} A_2\widehat{Y_3}.
	\end{equation}
	This equation is an example of the so-called inhomogeneous Yang-Baxter-like matrix equations. We emphasize that, to the best of our knowledge, they have not yet been investigated in the literature. At this point we recall the first equation from \eqref{Sylsystem}, i.e.,
	\begin{equation}
		\label{Sylnilp2}
		A_1X_1-X_1A_1=Y_1.
	\end{equation}
	Since the latter is a linear equation which is much more simple to solve than \eqref{X123}, we proceed to characterize the solutions to \eqref{Sylnilp2}, and afterwards we inspect which of them solve \eqref{X123} as well.

	\begin{theorem}\label{prvaSyl}
		Let $A_1=\operatorname{diag}(J_2(0),...,J_2(0))=I_k\otimes J_2(0)$ for some $k\geq 1$.
		Then the singular Sylvester equation $A_1X_1-X_1A_1=Y_1$ is consistent if and only if
		\begin{equation}\label{ACCA}
			A_1Y_1+Y_1A_1=0,
		\end{equation}
		and in this case a particular solution is
		\begin{equation}
			X_0 =A_1^\dag Y_1,
		\end{equation}
		where $A_1^\dagger$ is the Moore-Penrose inverse of $A_1$.\end{theorem}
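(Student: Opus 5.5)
The plan is to prove the two directions separately, using the explicit block structure of $A_1$ throughout.

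\emph{Necessity.} Assume $A_1X_1-X_1A_1=Y_1$ is consistent. Then \eqref{A1Y1=Y1A1} is exactly the first claim of Lemma \ref{necesY1Y4}. Its proof uses only $A_1^2=0$ and the Sylvester relation: multiply the equation by $A_1$ on the left and on the right, and compare the two expressions for $A_1X_1A_1$. So I will simply cite that argument, noting that $A_1=I_k\otimes J_2(0)$ satisfies $A_1^2=I_k\otimes J_2(0)^2=0$ by the mixed-product property (Proposition \ref{Kron} ii)).

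\emph{Sufficiency.} Assume $A_1Y_1=-Y_1A_1$, and verify directly that $X_0=A_1^\dag Y_1$ is a solution. The computation is
$$A_1X_0-X_0A_1=A_1A_1^\dag Y_1-A_1^\dag Y_1A_1=A_1A_1^\dag Y_1+A_1^\dag A_1Y_1=(A_1A_1^\dag+A_1^\dag A_1)Y_1,$$
where the hypothesis $Y_1A_1=-A_1Y_1$ was used in the middle step. It therefore suffices to show that $A_1A_1^\dag+A_1^\dag A_1=I_{2k}$.

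\emph{The projector identity.} By Proposition \ref{Kron} iii), $A_1^\dag=I_k^\dag\otimes J_2(0)^\dag=I_k\otimes J_2(0)^T$, using the formula for $J_2(0)^\dag$ recalled in the preliminaries. By the mixed-product property,
$$A_1A_1^\dag+A_1^\dag A_1=I_k\otimes\bigl(J_2(0)J_2(0)^T+J_2(0)^TJ_2(0)\bigr).$$
A direct $2\times 2$ computation gives $J_2(0)J_2(0)^T=\operatorname{diag}(1,0)$ and $J_2(0)^TJ_2(0)=\operatorname{diag}(0,1)$, so the bracket equals $I_2$ and the whole expression equals $I_k\otimes I_2=I_{2k}$. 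This completes the sufficiency.

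There is no real obstacle; the only substantive point is this last identity. It is exactly where the hypothesis that $A_1$ has no $J_1(0)$ blocks is needed. A $1\times 1$ zero block would contribute $0$ rather than $1$ to $A_1A_1^\dag+A_1^\dag A_1$, and the particular solution $A_1^\dag Y_1$ would then fail in general.
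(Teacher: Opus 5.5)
Your proposal is correct and follows essentially the same route as the paper. Necessity comes from the direct computation $A_1Y_1+Y_1A_1=-A_1X_1A_1+A_1X_1A_1=0$ using $A_1^2=0$; sufficiency comes from $A_1X_0-X_0A_1=(A_1A_1^\dag+A_1^\dag A_1)Y_1$ together with $A_1A_1^\dag+A_1^\dag A_1=I_k\otimes\bigl(J_2(0)J_2(0)^T+J_2(0)^TJ_2(0)\bigr)=I_{2k}$ via the mixed-product property, and your closing remark about $J_1(0)$ blocks is exactly why the paper's next theorem needs an extra consistency condition.
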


	\begin{proof}
		
		$(\Rightarrow):$ Because of $A_1^2=0$, we immediately have:
		\begin{align*}
			A_1Y_1+Y_1A_1 &=A_1(A_1X_1-X_1A_1)+(A_1X_1-X_1A_1)A_1\\
			& =-A_1X_1A_1+A_1X_1A_1=0.
		\end{align*}
		
		$(\Leftarrow):$ Let the consistency condition $A_1Y_1+Y_1A_1=0$ holds. We will prove that then $X_0=A_1^\dag Y_1$ is a particular solution to the Sylvester equation $A_1X_1-X_1A_1=Y_1$. We have:
		\begin{align*}
			A_1X_0-X_0A_1&=A_1A_1^\dag Y_1-(A_1^\dag Y_1)A_1=A_1A_1^\dag Y_1-A_1^\dag (-A_1Y_1)\\
			&=(A_1A_1^\dag+A_1^\dag A_1)Y_1=Y_1.
		\end{align*}
		To complete the proof, we will show that $A_1A_1^\dag +A_1^\dag A_1=I$ by using Proposition \ref{Kron}. Recall that $A_1^\dag=(I_k\oplus J_2(0))^\dag=I_k\oplus J_2(0)^T$, while the mixed-product property implies
		\begin{align*}
			A_1A_1^\dag&=(I_k\oplus J_2(0))(I_k\oplus J_2(0)^T)=I_k\oplus (J_2(0)J_2(0)^T)=I_k\oplus \left[\begin{array}{cc}
				1 & 0\\
				0 & 0
			\end{array}\right],\\
			A_1^\dag A_1&=(I_k\oplus J_2(0)^T)(I_k\oplus J_2(0))=I_k\oplus (J_2(0)^TJ_2(0))=I_k\oplus \left[\begin{array}{cc}
				0 & 0\\
				0 & 1
			\end{array}\right].
		\end{align*}
		Finally, the linearity gives
		$$A_1A_1^\dag+A_1^\dag A_1=I_k\oplus \left[\begin{array}{cc}
			1 & 0\\
			0 & 0
		\end{array}\right]+I_k\oplus \left[\begin{array}{cc}
			0 & 0\\
			0 & 1
		\end{array}\right]=I_k\otimes I_2=I_{2k},$$
		completing the proof. 
	\end{proof}
	Notice that \eqref{ACCA} coincides with the condition \eqref{A1Y1=Y1A1} from Lemma \ref{necesY1Y4}. Therefore this property is not only necessary for $Y_1$, but is also sufficient.
	
	When a matrix $A_1$ contains at least one $J_1(0)$ block, the following theorem is convenient.
	
	\begin{theorem}\label{Th3.2.}
		Let $A_1=\left[\begin{array}{cc}
			A_{11} & 0\\
			0 & 0
		\end{array}\right]$ where $A_{11}=\operatorname{diag}(J_2(0),...,J_2(0))=I_k\otimes J_2(0)$ for some $k\geq 1$.
		Then the singular Sylvester equation $A_1X_1-X_1A_1=Y_1$ is consistent if and only if both
		\begin{equation}\label{ACCA1}
			A_1Y_1+Y_1A_1=0
		\end{equation}
		and
		\begin{equation}\label{ACCA2}
			(I-A_1A_1^\dag)Y_1(I-A_1^\dag A_1)=0
		\end{equation}
		are satisfied, and in this case a particular solution is
		\begin{equation}\label{X0sol}
			X_0 =A_1^\dag Y_1-JY_1A_1^\dag,\;J=\left[\begin{array}{cc}
				0 & 0\\
				0 & I
			\end{array}\right].
		\end{equation}
	\end{theorem}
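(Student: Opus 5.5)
The plan is to work with the two orthogonal projections $P:=A_1A_1^\dag$ and $Q:=A_1^\dag A_1$, and to reduce everything to a few algebraic identities relating them to $J$. The consistency condition turns out to be exactly what forces the correction term to vanish.

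\emph{Block identities.} Since $A_1=\operatorname{diag}(A_{11},0)$, we have $A_1^\dag=\operatorname{diag}(A_{11}^\dag,0)$. By the computation in the proof of Theorem \ref{prvaSyl}, $A_{11}A_{11}^\dag+A_{11}^\dag A_{11}=I_{2k}$, hence
$$P+Q=\mat{I_{2k}}{0}{0}{0}=I-J.$$
From the block forms we also read off $JA_1=A_1J=0$ and $JA_1^\dag=A_1^\dag J=0$.

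\emph{Necessity.} Condition \eqref{ACCA1} follows exactly as in Theorem \ref{prvaSyl}, using only $A_1^2=0$. For \eqref{ACCA2}, substitute $Y_1=A_1X_1-X_1A_1$ into $(I-P)Y_1(I-Q)$. The Penrose equation $A_1A_1^\dag A_1=A_1$ gives $(I-P)A_1=0$ and $A_1(I-Q)=0$, so both terms vanish.

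\emph{Sufficiency.} Assume \eqref{ACCA1} and \eqref{ACCA2}, and compute $A_1X_0-X_0A_1$ term by term.
\begin{itemize}
\item $A_1A_1^\dag Y_1=PY_1$.
\item $A_1JY_1A_1^\dag=0$, since $A_1J=0$.
\item $A_1^\dag Y_1A_1=-A_1^\dag A_1Y_1=-QY_1$, using \eqref{ACCA1}.
\item $JY_1A_1^\dag A_1=JY_1Q$.
\end{itemize}
Collecting these and using $P+Q=I-J$,
$$A_1X_0-X_0A_1=(P+Q)Y_1+JY_1Q=Y_1-JY_1(I-Q).$$
It therefore remains to show $JY_1(I-Q)=0$. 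Since $I-P=J+Q$, condition \eqref{ACCA2} reads $JY_1(I-Q)=-QY_1(I-Q)$. Moreover,
$$QY_1(I-Q)=A_1^\dag (A_1Y_1)(I-Q)=-A_1^\dag Y_1A_1(I-Q)=0,$$
where the middle step uses \eqref{ACCA1} and the last uses $A_1(I-Q)=0$. Hence $JY_1(I-Q)=0$ and $X_0$ solves the equation.

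The main obstacle is this final step: recognising that the leftover term $JY_1(I-Q)$ is exactly the part of $Y_1$ that \eqref{ACCA2} controls, after rewriting $I-P$ as $J+Q$ and then applying \eqref{ACCA1} once more.
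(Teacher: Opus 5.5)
Your proof is correct, but it takes a different route from the paper's. The paper works entirely in block coordinates. It writes $X_1=[X_{ij}]$ and $Y_1=[Y_{ij}]$ relative to $A_1=\operatorname{diag}(A_{11},0)$ and unpacks \eqref{ACCA1}--\eqref{ACCA2} into seven block conditions, among them $A_{11}Y_{12}=0$, $Y_{21}(I-A_{11}^\dag A_{11})=0$ and $Y_{22}=0$. It then solves each block equation separately: the $(1,1)$ block via Theorem \ref{prvaSyl}, then $X_{12}=A_{11}^\dag Y_{12}$, $X_{21}=-Y_{21}A_{11}^\dag$, $X_{22}=0$. Finally it checks that the assembled block matrix solves the equation. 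It never explicitly verifies that this matrix equals $A_1^\dag Y_1-JY_1A_1^\dag$, though it does.

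You instead verify the stated formula directly. You use only $A_1^2=0$, the Penrose identity $A_1A_1^\dag A_1=A_1$, $A_1J=0$, and $A_1A_1^\dag+A_1^\dag A_1=I-J$. Your observation that, given \eqref{ACCA1}, condition \eqref{ACCA2} is equivalent to the vanishing of the residual $JY_1(I-A_1^\dag A_1)$ is the coordinate-free form of the paper's block conditions $Y_{21}(I-A_{11}^\dag A_{11})=0$ and $Y_{22}=0$.

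The paper's version shows what the conditions say block by block and where $X_0$ comes from. Yours is shorter and more general. Take any $A_1$ with $A_1^2=0$, any inner inverse $G$ of $A_1$, and set $J:=I-A_1G-GA_1$. Then $A_1J=A_1-A_1GA_1=0$ automatically, and your computation shows verbatim that $X_0=GY_1-JY_1G$ solves $A_1X_1-X_1A_1=Y_1$ whenever $A_1Y_1+Y_1A_1=0$ and $(I-A_1G)Y_1(I-GA_1)=0$. No Jordan normal form is needed.
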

	
	\begin{proof}
		$(\Rightarrow):$ Because of $A_1^2=0$ and $A_1A_1^\dag A_1=A_1$, we immediately have:
		\begin{align*}
			A_1Y_1	+YA_1 &= A_1(A_1X_1-X_1A_1)+(A_1X_1-X_1A_1)A_1\\
			& =-A_1X_1A_1+A_1X_1A_1=0,
		\end{align*}
		as well as
		\begin{align*}
			&(I-A_1A_1^\dag)Y_1(I-A_1^\dag A_1)=(I-A_1A_1^\dag)(A_1X_1-X_1A_1)(I-A_1^\dag A_1)\\
			&\phantom{=}=(I-A_1A_1^\dag) A_1X_1(I-A_1^\dag A_1)-(I-A_1A_1^\dag)X_1A_1(I-A_1^\dag A_1)=0.
		\end{align*}
		
		$(\Leftarrow):$ Let the base vectors be arranged such that 
		$$
		A_1=\left[\begin{array}{cc}
			A_{11} & 0\\
			0 & 0
		\end{array}\right],\;A_{11}=I_k\otimes J_2(0),\;k\geq 1,$$
		holds. Respectivedy, decompose $X_1$ and $Y_1$ as
		$$X_1=\left[\begin{array}{cc}
			X_{11} & X_{12}\\
			X_{21} & X_{22}
		\end{array}\right],\qquad
		Y_1=\left[\begin{array}{cc}
			Y_{11} & Y_{12}\\
			Y_{21} & Y_{22}
		\end{array}\right].
		$$
		Then the conditions \eqref{ACCA1}-\eqref{ACCA2} read:
		\begin{align*}
			0&=A_1Y_1+YA_1=\left[\begin{array}{cc}
				A_{11}Y_1+Y_1A_{11} & A_{11}Y_{12}\\
				Y_{21}A_{11} & 0
			\end{array}\right],\\
			0&=(I-A_1A_1^\dag)Y(I-A_1^\dag A_1)\\
			&=\left[\begin{array}{cc}
				(I-A_{11}A_{11}^\dag)Y_{11}(I-A_{11}^\dag A_{11}) & (I-A_{11}A_{11}^\dag)Y_{12}\\
				Y_{21}(I-A_{11}^\dag A_{11}) & Y_{22}
			\end{array}\right].
		\end{align*}
		Hence, we assume that:
		\begin{align*}
			A_{11}Y_{11}+Y_{11}A_{11}&=0,\\
			A_{11}Y_{12}&=0,\\
			Y_{21}A_{11}&=0,\\
			(I-A_{11}A_{11}^\dag)Y_{11}(I-A_{11}^\dag A_{11})&=0,\\
			(I-A_{11}A_{11}^\dag)Y_{12}&=0,\\
			Y_{21}(I-A_{11}^\dag A_{11})&=0,\\
			Y_{22}&=0,
		\end{align*}
		and now we want to find a particular solution to the equation $A_1X_1-X_1A_1=Y_1$, which in terms of the previous calculations reads	$$\left[\begin{array}{cc}
			Y_{11} & Y_{12}\\
			Y_{21} & Y_{22}
		\end{array}\right]=Y_1=A_1X_1-X_1A_1=\left[\begin{array}{cc}
			A_{11}X_{11}-X_{11}A_{11} & A_{11}X_{12}\\
			-X_{21}A_{11} & 0
		\end{array}\right],$$
		that is
		$$
		A_{11}Y_{11}-Y_{11}A_{11}=Y_{11},\;A_{11}X_{12}=Y_{12},\;-X_{21}A_{11}=Y_{21},\;0 =Y_{22}.
		$$
		
		By Theorem \ref{prvaSyl}, $A_{11}X_{11}-X_{11}A_{11}=Y_{11}$ is consistent iff $A_{11}Y_{11}+Y_{11}A_{11}=0$, and then its particular solution is $X_{11}=A_{11}^\dag Y_{11}$. In other words, $A_{11}A_{11}^\dag Y_{11}-A_{11}^\dag Y_{11}A_{11}=Y_{11}$.
		
		Comparing $Y_{12}=A_{11}A_{11}^\dag Y_{12}$ and $A_{11}X_{12}=Y_{12}$, we may take $X_{12}=A_{11}^\dag Y_{12}$. Similarly, comparing $Y_{21}=Y_{21}A_{11}^\dag A_{11}$ and $-X_{21}A_{11}=Y_{21}$, we may take $X_{21}=-Y_{21}A_{11}^\dag$. Hence, we will choose
		$$X_0=\left[\begin{array}{cc}
			X_{11} & X_{12}\\
			X_{21} & X_{22}
		\end{array}\right]=\left[\begin{array}{cc}
			A_{11}^\dag Y_{11} & A_{11}^\dag Y_{12}\\
			-Y_{21}A_{11}^\dag & 0
		\end{array}\right].$$
		
		Now
		\begin{align*}
			A_1X_0-X_0A_1&=\left[\begin{array}{cc}
				A_{11} & 0\\
				0 & 0
			\end{array}\right]\left[\begin{array}{cc}
				A_{11}^\dag Y_{11} & A_{11}^\dag Y_{12}\\
				-Y_{21}A_{11}^\dag & 0
			\end{array}\right]\\
			&\phantom{==}-\left[\begin{array}{cc}
				A_{11}^\dag Y_{11} & A_{11}^\dag Y_{12}\\
				-Y_{21}A_{11}^\dag & 0
			\end{array}\right]\left[\begin{array}{cc}
				A_{11} & 0\\
				0 & 0
			\end{array}\right]\\
			&=\left[\begin{array}{cc}
				A_{11}A_{11}^\dag Y_{11} & A_{11}A_{11}^\dag Y_{12}\\
				0 & 0
			\end{array}\right]-\left[\begin{array}{cc}
				A_{11}^\dag Y_{11}A_{11} & 0\\
				-Y_{21}A_{11}^\dag A_{11} & 0
			\end{array}\right]\\
			&=\left[\begin{array}{cc}
				A_{11}A_{11}^\dag Y_{11}-A_{11}^\dag Y_{11}A_{11} & A_{11}A_{11}^\dag Y_{12}\\
				Y_{21}A_{11}^\dag A_{11} & 0
			\end{array}\right]\\
			&=\left[\begin{array}{cc}
				Y_{11} & Y_{12}\\
				Y_{21} & Y_{22}
			\end{array}\right]=Y_1,
		\end{align*}
		ending the proof.
	\end{proof}

	\begin{remark}
		The consistency condition \eqref{ACCA2} regarding the Moore-Penrose inverses can be rewritten as $Y_{22}=0$, as well as $Y_1(\N(A_1))\subset \R(A_1)$ and $(\R(A_1^\dag))^\bot \subset \N(Y_1)$. In particular, for such $A_1$ it is true $(\R(A_1^\dag))^\bot=\N(A_1)$.
	\end{remark}

	\begin{example}[Nilpotent Sylvester equation]\label{Ex1}
		Let $A_1=J_2(0)\oplus 0=\left[\begin{array}{cc|c}
			0 & 1 & 0\\
			0 & 0 & 0\\
			\hline
			0 & 0 & 0
		\end{array}\right]$ and suppose that $Y_1=[y_{ij}]_{3\times 3}$ and $X_1=[x_{ij}]_{3\times 3}$. The Sylvester equation $A_1X_1-X_1A_1=Y_1$ is equivalent to
		$$\left[\begin{array}{cc|c}
			x_{21} & x_{22}-x_{11} & x_{23}\\
			0 & -x_{21} & 0\\
			\hline
			0 & -x_{31} & 0
		\end{array}\right]=\left[\begin{array}{cc|c}
			y_{11} & y_{12} & y_{13}\\
			y_{21} & y_{22} & y_{23}\\
			\hline
			y_{31} & y_{32} & y_{33}
		\end{array}\right],$$
		which implies the most general form for matrix $Y_1$ establishing consistency as well as the general solution $X_1$:
		$$Y_1=\left[\begin{array}{cc|c}
			y_{11} & y_{12} & y_{13}\\
			0 & -y_{11} & 0\\
			\hline
			0 & y_{32} & 0
		\end{array}\right],\;X_1=\left[\begin{array}{cc|c}
			x_{11} & x_{12} & x_{13}\\
			y_{11} & y_{12}+x_{11} & y_{13}\\
			\hline
			-y_{32} & x_{32} & x_{33}
		\end{array}\right].$$
		Immediate checking shows that $A_1Y_1+Y_1A_1=0$ and $(I-A_1A_1^\dag)Y_1(I-A_1^\dag A_1)=0$, showing that both conditions \eqref{ACCA1}--\eqref{ACCA2} are necessary. Also, the formula \eqref{X0sol} yields
		$$X_0=A_1^\dag Y_1-JY_1A_1^\dag=\left[\begin{array}{cc|c}
			0 & 0 & 0\\
			0 & y_{12} & 0\\
			\hline
			-y_{32} & 0 & 0
		\end{array}\right],$$
		and the immediate checking shows that it is indeed a particular solution of the Sylvester equation.\hfill$\clubsuit$
	\end{example}
	
	\subsection{Nilpotent Yang-Baxter-like Matrix Equation}
	Below we characterize consistency of the equation \eqref{X123}. The condition $A_1^2=0$ is equivalent to the following inclusion: $\R(A_1)\subseteq \N(A_1)$. Hence, there is some subspace $W_1$ such that $\R(A_1)\oplus W_1=\N(A_1)$. There is also some subspace $W_2$ such that $\N(A_1)\oplus W_2=\mathbb{C}^{N_1}$. Since:
	$$A_1(\R(A_1))=0,\;A_1(W_1)=0,\;A_1(W_2)\subseteq \R(A_1),$$
	we may choose the basis of the space such that the block-matrix form of $A_1$ is:
	$$A_1\sim\left[\begin{array}{ccc}
		0 & I_r & 0\\
		0 & 0 & 0\\
		0 & 0 & 0
	\end{array}\right]:\left[\begin{array}{c}
		\R(A_1)\\
		W_2\\
		W_1
	\end{array}\right]\rightarrow \left[\begin{array}{c}
		\R(A_1)\\
		W_2\\
		W_1
	\end{array}\right],$$
	where $r=r(A_1)$.

	\begin{theorem}[Solvability of the inhomogeneous YBME when $A_1^2=0$]\label{inhomYBMEsol}
		Let
		$$
		A_1=S^{-1}\begin{bmatrix}
			0 & I_r & 0\\
			0 & 0 & 0\\
			0 & 0 & 0
		\end{bmatrix}S\in\mathbb{C}^{N_1\times N_1},
		$$
		where $S$ is invertible and $r=\operatorname{rank}(A_1)$.  
		Let $B_1,X_1\in\mathbb{C}^{N_1\times N_1}$ be conformally partitioned as
		$$
		B_1=S^{-1}\begin{bmatrix}
			B_{11} & B_{12} & B_{13}\\
			B_{21} & B_{22} & B_{23}\\
			B_{31} & B_{32} & B_{33}
		\end{bmatrix}S,\;
		X_1=S^{-1}\begin{bmatrix}
			X_{11} & X_{12} & X_{13}\\
			X_{21} & X_{22} & X_{23}\\
			X_{31} & X_{32} & X_{33}
		\end{bmatrix}S.
		$$
		Define
		$$
		M := [\,X_{21}\;\;X_{22}\;\;X_{23}\,],\;
		R_2 := [\,B_{21}\;\;B_{22}\;\;B_{23}\,],\;
		R_3 := [\,B_{31}\;\;B_{32}\;\;B_{33}\,],
		$$
		and
		$$
		R_1(X_{21}) := [\,B_{11}\;\;X_{21}-B_{12}\;\;B_{13}\,].
		$$
		
		Then the inhomogeneous Yang-Baxter-like matrix equation
		$$
		A_1X_1A_1 - X_1A_1X_1 = B_1
		$$
		is consistent if and only if there exists a matrix $X_{21}$ such that
		\begin{equation}\label{cond1}
			X_{21}^2 = -B_{21},
		\end{equation}
		and, for this fixed choice of $X_{21}$, there exist matrices $X_{22},X_{23}$ satisfying
		\begin{equation}\label{cond2}
			X_{21}X_{22}=-B_{22},\;
			X_{21}X_{23}=-B_{23},
		\end{equation}
		and the range conditions
		\begin{equation}\label{cond3}
			R_3 = R_3 M^- M,\qquad
			R_1(X_{21}) = R_1(X_{21}) M^- M
		\end{equation}
		hold for some (equivalently, any) inner inverse $M^-$ of $M$.
		
		For each fixed admissible $X_{21}$, all solutions $X_1$ are obtained as follows:
		\begin{itemize}
			\item[(i)] Choose any $X_{21}$ satisfying \eqref{cond1}.
			\item[(ii)] For any inner inverse $X_{21}^-$,
			\[
			\begin{aligned}
				X_{22}&=-X_{21}^-B_{22}+(I-X_{21}^-X_{21})Z_{22},\\
				X_{23}&=-X_{21}^-B_{23}+(I-X_{21}^-X_{21})Z_{23},
			\end{aligned}
			\]
			with arbitrary parameters $Z_{22},Z_{23}$, provided
			\[
			X_{21}X_{21}^-B_{22}=B_{22},\qquad
			X_{21}X_{21}^-B_{23}=B_{23}.
			\]
			\item[(iii)] For any inner inverse $M^-$,
			\[
			\begin{aligned}
				X_{31}&=-R_3M^-+Z_{31}(I-MM^-),\\
				X_{11}&=R_1(X_{21})M^-+Z_{11}(I-MM^-),
			\end{aligned}
			\]
			with arbitrary parameters $Z_{31},Z_{11}$.
			\item[(iv)] The blocks $X_{12},X_{13},X_{32},X_{33}$ are arbitrary.
		\end{itemize}
	\end{theorem}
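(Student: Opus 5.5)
The plan is to reduce everything to the normal form of $A_1$ and then read off the equation block by block. Since the equation is invariant under simultaneous similarity, I will conjugate by $S$ and write $A=\left[\begin{smallmatrix}0&I_r&0\\0&0&0\\0&0&0\end{smallmatrix}\right]$, with $X=[X_{ij}]$ and $B=[B_{ij}]$ the transformed blocks. Then $A_1X_1A_1-X_1A_1X_1=B_1$ is equivalent to $AXA-XAX=B$.

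The first step is the two products.
\begin{itemize}
\item Left multiplication by $A$ moves the second block row of $X$, namely $M=[X_{21}\;X_{22}\;X_{23}]$, into the first block row and kills the rest. Right multiplication by $A$ moves the first block column into the second block column. Hence $AXA$ has the single nonzero block $X_{21}$ in position $(1,2)$.
\item Similarly, $XA=\left[\begin{smallmatrix}0&X_{11}&0\\0&X_{21}&0\\0&X_{31}&0\end{smallmatrix}\right]$, so
$$XAX=\begin{bmatrix}X_{11}M\\ X_{21}M\\ X_{31}M\end{bmatrix}.$$
\end{itemize}
Consequently the equation splits into three independent block-row equations:
$$X_{11}M=[\,0\;\;X_{21}\;\;0\,]-[\,B_{11}\;\;B_{12}\;\;B_{13}\,],\qquad -X_{21}M=R_2,\qquad -X_{31}M=R_3 .$$
I will track the sign convention against the statement's $R_1(X_{21})$ and absorb any sign into the definition of $R_1$. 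The blocks $X_{12},X_{13},X_{32},X_{33}$ never appear, which gives (iv).

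The second block-row equation, written entrywise, gives exactly \eqref{cond1}: $X_{21}^2=-B_{21}$, together with \eqref{cond2}. This is the only genuinely nonlinear part, a matrix square-root problem. I will not solve it in closed form; I will treat it as a parameter, exactly as the statement does in (i). Once $X_{21}$ is fixed, the two equations $X_{21}X_{22}=-B_{22}$ and $X_{21}X_{23}=-B_{23}$ are linear. Theorem \ref{ThmPenrose}, applied with $S=X_{21}$ and $T=I$, gives both the solvability criterion $X_{21}X_{21}^-B_{2j}=B_{2j}$ and the general solution in (ii).

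With $X_{21}$, $X_{22}$, $X_{23}$ fixed, $M$ is known. The first and third block-row equations are then linear equations of the form $ZM=W$, with $Z=X_{11}$ or $Z=X_{31}$. Theorem \ref{ThmPenrose}, now with $S=I$ and $T=M$, gives the consistency conditions $W=WM^-M$, which are the range conditions \eqref{cond3}, and the general solutions in (iii). That the conditions hold for some inner inverse if and only if they hold for every one is the standard fact that $WM^-M=W$ is equivalent to $\R(W^*)\subseteq\R(M^*)$, which does not depend on the choice of $M^-$.

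The main obstacle is bookkeeping rather than analysis. First, the order of quantifiers has to be right: existence of $X_{21}$, then of $X_{22},X_{23}$, then the range conditions, which depend on $M$ and hence on all earlier choices. Second, the signs in $R_1$ must be kept consistent. Once these are in place, both necessity and sufficiency follow from the block equivalence above, since every step is an equivalence.
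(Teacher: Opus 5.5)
Your route is the paper's route: conjugate by $S$, note that $AXA$ has the single block $X_{21}$ in position $(1,2)$ while the $i$-th block row of $XAX$ is $X_{i1}M$, then apply Theorem~\ref{ThmPenrose} one block row at a time. Your handling of the second and third block rows is fine, as are the order of quantifiers and the independence from the choice of $M^-$.

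The real problem is your plan to ``absorb any sign into the definition of $R_1$''. Your own computation, which is correct, gives
\[
X_{11}M=[\,-B_{11}\;\;X_{21}-B_{12}\;\;-B_{13}\,].
\]
The statement instead fixes $R_1(X_{21})=[\,B_{11}\;\;X_{21}-B_{12}\;\;B_{13}\,]$, which differs in the first and third blocks. Two things fail to survive flipping the signs of individual column blocks of $W$. One is the condition $W=WM^-M$, which says the row space of $W$ lies in that of $M$. The other is the formula $X_{11}=WM^-+Z_{11}(I-MM^-)$. So the sign cannot be absorbed, and the theorem as printed is false.

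Here is a counterexample. Take $A_1=J_2(0)$, so $S=I$, $r=1$ and the third block is void. Take $X_1=\begin{bmatrix}-1&0\\1&1\end{bmatrix}$, which gives $B_1=\begin{bmatrix}1&2\\-1&-1\end{bmatrix}$, so the equation is consistent. Now $X_{21}=\pm1$ forces $X_{22}=X_{21}$, hence $M=[\,X_{21}\;\;X_{21}\,]$. The printed $R_1(X_{21})=[\,1\;\;X_{21}-2\,]$ lies in the row space of $M$ for neither choice of $X_{21}$. With the corrected $R_1=[\,-1\;\;X_{21}-2\,]$, the choice $X_{21}=1$ works and returns $X_{11}=-1$, the actual solution.

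The paper's proof makes exactly this slip. It writes $-X_{11}X_{21}=B_{11}$ and $-X_{11}X_{23}=B_{13}$ correctly, then calls them equivalent to $X_{11}M=R_1(X_{21})$. Its own Example~\ref{Ex3} (Case A, $x_{11}=-b_{11}/s$) uses the corrected sign. So do not hedge: state explicitly that $R_1(X_{21})$ must be $[\,-B_{11}\;\;X_{21}-B_{12}\;\;-B_{13}\,]$. With that correction your argument proves the theorem completely.
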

	
	\begin{proof}
		Using the similarity invariance of the equation $A_1X_1A_1-X_1A_1X_1=B_1$, we equivalently solve
		$$
		\widetilde{A_1} \widetilde{X_1} \widetilde{A_1} - \widetilde{X_1} \widetilde{A_1} \widetilde{X_1} = \widetilde{B_1},
		$$
		where
		$$
		\widetilde{A_1}=\begin{bmatrix}
			0 & I_r & 0\\
			0 & 0 & 0\\
			0 & 0 & 0
		\end{bmatrix},\quad
		\widetilde{X_1}=\begin{bmatrix}
			X_{11} & X_{12} & X_{13}\\
			X_{21} & X_{22} & X_{23}\\
			X_{31} & X_{32} & X_{33}
		\end{bmatrix},\quad
		\widetilde{B_1}=\begin{bmatrix}
			B_{11} & B_{12} & B_{13}\\
			B_{21} & B_{22} & B_{23}\\
			B_{31} & B_{32} & B_{33}
		\end{bmatrix}.
		$$
		
		A direct block computation shows that this equation is equivalent to the following system:
		\begin{align}
			-X_{11}X_{21} &= B_{11}, \label{eq11}\\
			X_{21}-X_{11}X_{22} &= B_{12}, \label{eq12}\\
			-X_{11}X_{23} &= B_{13}, \label{eq13}\\
			-X_{21}^2 &= B_{21}, \label{eq21}\\
			-X_{21}X_{22} &= B_{22}, \label{eq22}\\
			-X_{21}X_{23} &= B_{23}, \label{eq23}\\
			-X_{31}X_{21} &= B_{31}, \label{eq31}\\
			-X_{31}X_{22} &= B_{32}, \label{eq32}\\
			-X_{31}X_{23} &= B_{33}. \label{eq33}
		\end{align}
		The blocks $X_{12},X_{13},X_{32},X_{33}$ do not appear and are therefore arbitrary.
		
		\medskip
		\noindent\emph{Necessity.}
		Equation \eqref{eq21} implies the existence of a matrix $X_{21}$ satisfying
		$$
		X_{21}^2=-B_{21}.
		$$
		Fix such a choice of $X_{21}$.  
		Equations \eqref{eq22} and \eqref{eq23} are solvable if and only if
		$$
		X_{21}X_{21}^-B_{22}=B_{22},\qquad
		X_{21}X_{21}^-B_{23}=B_{23},
		$$
		for some (equivalently, any) inner inverse $X_{21}^-$, in which case their general solutions are
		$$
		\begin{aligned}
			X_{22}&=-X_{21}^-B_{22}+(I-X_{21}^-X_{21})Z_{22},\\
			X_{23}&=-X_{21}^-B_{23}+(I-X_{21}^-X_{21})Z_{23}.
		\end{aligned}
		$$
		
		Let $M=[\,X_{21}\;\;X_{22}\;\;X_{23}\,]$.  
		Then equations \eqref{eq31}–\eqref{eq33} are equivalent to
		$$
		X_{31}M=-R_3,
		$$
		which is solvable if and only if $R_3=R_3M^-M$ for some inner inverse $M^-$.  
		Similarly, equations \eqref{eq11}–\eqref{eq13} are equivalent to
		$$
		X_{11}M=R_1(X_{21}),
		$$
		which is solvable if and only if $R_1(X_{21})=R_1(X_{21})M^-M$.
		
		Thus all stated conditions are necessary.
		
		\medskip
		\noindent\emph{Sufficiency.}
		Assume that there exists a matrix $X_{21}$ satisfying $X_{21}^2=-B_{21}$ and that the range conditions hold.  
		Define $X_{22},X_{23}$ as above, ensuring \eqref{eq22} and \eqref{eq23}.  
		Next define
		$$
		\begin{aligned}
			X_{31}&=-R_3M^-+Z_{31}(I-MM^-),\\
			X_{11}&=R_1(X_{21})M^-+Z_{11}(I-MM^-),
		\end{aligned}
		$$
		which satisfy equations \eqref{eq11}–\eqref{eq13} and \eqref{eq31}–\eqref{eq33}.  
		All nine block equations are therefore satisfied, and hence
		$$
		\widetilde{A_1} \widetilde{X_1} \widetilde{A_1} - \widetilde{X_1} \widetilde{A_1} \widetilde{X_1} = \widetilde{B_1}.
		$$
		Applying the inverse similarity transformation completes the proof.
	\end{proof}
	
	\begin{remark}[On the square-root condition]
		The condition \eqref{cond1} requires the existence of a matrix square root of $-B_{21}$. 
		Such a square root need not exist in general and, when it does, is typically not unique. We emphasize that there are square complex matrices that don't have a square root, e.g. $J_2(0)$. For more details see \cite{Higham}.
		Each admissible choice of $X_{21}$ satisfying $X_{21}^2=-B_{21}$ generates a distinct affine family of solutions of the inhomogeneous YBME.  
		In particular, the solution set may be disconnected and its structure depends on the Jordan canonical form of $B_{21}$.
	\end{remark}
	
	It is a good time to state the particular case when the previous theorem deals with homogeneous YBME, that is, when $B_1=0$.
	
	\begin{theorem}[Solvability of the homogeneous YBME when $A_1^2=0$]\label{YBME_nil_2}
		Let
		$$
		A_1=S^{-1}\begin{bmatrix}
			0 & I_r & 0\\
			0 & 0 & 0\\
			0 & 0 & 0
		\end{bmatrix}S\in\mathbb{C}^{N_1\times N_1},
		$$
		where $S$ is invertible and $r=\operatorname{rank}(A)$.  
		Let $X_1\in\mathbb{C}^{N_1\times N_1}$ be conformally partitioned as
		$$
		X_1=S^{-1}\begin{bmatrix}
			X_{11} & X_{12} & X_{13}\\
			X_{21} & X_{22} & X_{23}\\
			X_{31} & X_{32} & X_{33}
		\end{bmatrix}S.
		$$
		Define
		$M := [\,X_{21}\;\;X_{22}\;\;X_{23}\,].
		$
		Then the (homogeneous) Yang-Baxter-like matrix equation
		$$
		A_1X_1A_1 = X_1A_1X_1 
		$$
		is consistent.
		
		For each nonzero nilpotent $X_{21}$ with $X_{21}^2=0$, all solutions $X_1$ are obtained as follows:
		\begin{itemize}
			\item[(i)] For any inner inverse $X_{21}^-$,
			$$
			X_{22}=(I-X_{21}^-X_{21})Z_{22},\;
			X_{23}=(I-X_{21}^-X_{21})Z_{23},
			$$
			with arbitrary parameters $Z_{22},Z_{23}$.
			\item[(ii)] For any inner inverse $M^-$,
			$$
			X_{31}=Z_{31}(I-MM^-),\;
			X_{11}=[\,0\;X_{21}\;0\,]M^-+Z_{11}(I-MM^-),
			$$
			with arbitrary parameters $Z_{31},Z_{11}$.
			\item[(iii)] The blocks $X_{12},X_{13},X_{32},X_{33}$ are arbitrary.
		\end{itemize}
	\end{theorem}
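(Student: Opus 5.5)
The plan is to specialize Theorem \ref{inhomYBMEsol} to the case $B_1=0$, i.e.\ all blocks $B_{ij}=0$. Working in the basis given by $S$, the equation $A_1X_1A_1=X_1A_1X_1$ becomes equivalent to the nine block equations \eqref{eq11}--\eqref{eq33} with zero right-hand sides, except that \eqref{eq12} retains the term $X_{21}$. The resulting system is
\[
X_{11}X_{21}=0,\quad X_{11}X_{22}=X_{21},\quad X_{11}X_{23}=0,\quad X_{21}^2=0,\quad X_{21}X_{22}=0,\quad X_{21}X_{23}=0,
\]
together with $X_{31}X_{21}=X_{31}X_{22}=X_{31}X_{23}=0$.

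Consistency is immediate, since $X_1=0$ (or any commuting choice, e.g.\ $X_1=A_1$) is a solution. Alternatively, in the framework of Theorem \ref{inhomYBMEsol}, condition \eqref{cond1} reduces to $X_{21}^2=0$, which always admits $X_{21}=0$. The range conditions \eqref{cond3} then hold trivially for $M=[0\;X_{22}\;X_{23}]$: we have $R_3=0$, and $R_1(X_{21})=[0\;X_{21}\;0]=0$.

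For a fixed $X_{21}$ with $X_{21}^2=0$, I would read off each item of Theorem \ref{inhomYBMEsol} with $B=0$. In step (ii), the solvability conditions $X_{21}X_{21}^-B_{2j}=B_{2j}$ are vacuous, and Theorem \ref{ThmPenrose} gives $X_{22}=(I-X_{21}^-X_{21})Z_{22}$ and $X_{23}=(I-X_{21}^-X_{21})Z_{23}$. In step (iii), $R_3=0$ yields $X_{31}=Z_{31}(I-MM^-)$, which solves $X_{31}M=0$. Also $R_1(X_{21})=[0\;X_{21}\;0]$, so $X_{11}=[0\;X_{21}\;0]M^-+Z_{11}(I-MM^-)$, provided the range condition $[0\;X_{21}\;0]=[0\;X_{21}\;0]M^-M$ holds. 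Item (iv) carries over verbatim.

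The main obstacle is this last range condition, which Theorem \ref{inhomYBMEsol} still requires and which is not automatic for nonzero $X_{21}$. I would analyze it via $\N(M)\subseteq\N([0\;X_{21}\;0])$. Writing $Z_{22}$ so that $X_{22}=(I-X_{21}^-X_{21})Z_{22}$, the condition amounts to requiring that the system $X_{11}X_{21}=0$, $X_{11}X_{22}=X_{21}$, $X_{11}X_{23}=0$ be solvable. I would either show it holds for suitable choices of the parameters $Z_{22},Z_{23}$, or state it explicitly as the implicit admissibility requirement, so that ``all solutions'' means exactly those parameter choices satisfying it. The final step is then simply to invoke the sufficiency part of Theorem \ref{inhomYBMEsol}.
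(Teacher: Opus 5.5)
You are following the paper's own route. The paper gives no separate proof of this theorem and simply presents it as the case $B_1=0$ of Theorem \ref{inhomYBMEsol}. Your reduction to the nine block equations is correct, as is consistency via $X_1=0$. So is the case $X_{21}=0$, where $R_3=0$ and $R_1(0)=0$ make both conditions in \eqref{cond3} vacuous. You also correctly locate the crux in the range condition $[\,0\;X_{21}\;0\,]=[\,0\;X_{21}\;0\,]M^-M$, which the paper silently drops.

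This, however, is a defect of the statement, not an obstacle you can argue around: without the condition, item (ii) produces non-solutions. Take $r\ge 2$, any $X_{21}\neq 0$ with $X_{21}^2=0$, and $Z_{22}=Z_{23}=0$. Then $X_{22}=X_{23}=0$. Whatever $X_{11}$ the formula returns, the $(1,2)$ block of $\widetilde{A_1}\widetilde{X_1}\widetilde{A_1}-\widetilde{X_1}\widetilde{A_1}\widetilde{X_1}$ is $X_{21}-X_{11}X_{22}=X_{21}\neq 0$.

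Your first option, showing the condition holds for suitable $Z_{22},Z_{23}$, cannot succeed in general. Let $E_{ij}$ denote the $r\times r$ matrix units and take $r=2$. A simultaneous similarity of the $r\times r$ blocks $X_{11},X_{21},X_{22}$ preserves $X_{11}X_{21}=0$, $X_{11}X_{22}=X_{21}$ and $X_{21}X_{22}=0$, so you may assume $X_{21}=E_{12}$. Then:
\begin{itemize}
\item $X_{11}X_{21}=0$ kills the first column of $X_{11}$;
\item $X_{21}X_{22}=0$ kills the second row of $X_{22}$;
\item hence $X_{11}X_{22}=0\neq X_{21}$.
\end{itemize}
So for $r=2$ no solution with $X_{21}\neq 0$ exists at all, although the theorem's parametrization is nonempty. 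For $r\ge 3$ such solutions do exist, e.g.\ $X_{11}=E_{12}$, $X_{21}=E_{13}$, $X_{22}=E_{23}$, all other blocks zero. So the condition genuinely restricts the admissible $Z_{22},Z_{23}$, rather than excluding nonzero $X_{21}$ altogether.

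Hence only your second option is viable, and it amounts to amending the theorem. For $X_{21}\neq 0$, the parameters $Z_{22},Z_{23}$ must satisfy $\N(M)\subseteq\N([\,0\;X_{21}\;0\,])$, exactly as \eqref{cond3} demands in Theorem \ref{inhomYBMEsol}.

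What does hold as printed is only the inclusion ``every solution has the stated form.'' Given a solution, take $Z_{22}=X_{22}$, $Z_{23}=X_{23}$, $Z_{31}=X_{31}$ and $Z_{11}=X_{11}$. Then use $X_{21}X_{22}=X_{21}X_{23}=0$, $X_{31}M=0$ and $X_{11}M=[\,0\;X_{21}\;0\,]$. You should commit to one of these two readings instead of leaving the proof as an either/or.
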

	\begin{corollary}
		The solutions to the YBME $A_1X_1A_1=X_1A_1X_1$ with $A_1^2=0$ are given by 
		$$X_1=S^{-1}\left[\begin{array}{ccc}
			Z_{11}(I-MM^-) & Z_{12} & Z_{13}\\
			0 & Z_{22} & Z_{23}\\
			Z_{31}(I-MM^-) & M_{32} & M_{33}
		\end{array}\right]S,$$
		with $M=[\,0\;X_{22}\;X_{23}\,]$ and arbitrary matrices $Z=[Z_{ij}]_{3\times 3}$.
	\end{corollary}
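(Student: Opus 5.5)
The plan is to specialize Theorem \ref{inhomYBMEsol} to $B_1=0$ and to the branch $X_{21}=0$ of the square-root condition \eqref{cond1}. We work throughout in the coordinates given by $S$, where $\widetilde{A_1}$ has the block form $\begin{bmatrix}0&I_r&0\\0&0&0\\0&0&0\end{bmatrix}$. Since the equation $A_1X_1A_1=X_1A_1X_1$ is invariant under the similarity $X_1\mapsto S^{-1}X_1S$, it suffices to verify the claim for $\widetilde{X_1}$ and conjugate back at the end.

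First, I would record the block system \eqref{eq11}--\eqref{eq33} with all $B_{ij}=0$:
\begin{align*}
&X_{11}X_{21}=0,\quad X_{21}=X_{11}X_{22},\quad X_{11}X_{23}=0,\quad X_{21}^2=0,\quad X_{21}X_{22}=0,\\
&X_{21}X_{23}=0,\quad X_{31}X_{21}=0,\quad X_{31}X_{22}=0,\quad X_{31}X_{23}=0.
\end{align*}
The blocks $X_{12},X_{13},X_{32},X_{33}$ do not occur, so they are free; I read the entries written $M_{32},M_{33}$ in the statement as such arbitrary blocks $Z_{32},Z_{33}$. Choosing $X_{21}=0$ satisfies \eqref{cond1} trivially. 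With this choice the equations $X_{21}X_{22}=0$ and $X_{21}X_{23}=0$ hold for arbitrary $X_{22}=Z_{22}$ and $X_{23}=Z_{23}$, which is condition \eqref{cond2}.

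Next, with $M=[\,0\;X_{22}\;X_{23}\,]$, the remaining equations are exactly $X_{11}M=0$ and $X_{31}M=0$. In the notation of Theorem \ref{inhomYBMEsol}, $R_1(0)=0$ and $R_3=0$, so the range conditions \eqref{cond3} hold automatically. By Theorem \ref{ThmPenrose}, applied to $LM=0$ with $S=I$, the general solution of each equation is $Z(I-MM^-)$ for arbitrary $Z$. This gives $X_{11}=Z_{11}(I-MM^-)$ and $X_{31}=Z_{31}(I-MM^-)$. Substituting these blocks back and conjugating by $S$ yields the displayed form of $X_1$.

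The only delicate point is interpretive rather than computational. The corollary describes the family of solutions corresponding to the branch $X_{21}=0$; the branches with nonzero $X_{21}$ satisfying $X_{21}^2=0$ are covered by Theorem \ref{YBME_nil_2}. I would state this explicitly in the proof. I would also check directly that every $X_1$ of the displayed form satisfies all nine block equations, so that the family indeed consists of solutions; this is immediate since every product in the list above contains either $X_{21}=0$ or a factor $(I-MM^-)M=0$.
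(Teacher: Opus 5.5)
Your proof is correct and follows the route the paper implicitly takes: the corollary is stated without proof as the $B_1=0$, $X_{21}=0$ specialization of Theorem~\ref{inhomYBMEsol} (equivalently of Theorem~\ref{YBME_nil_2}). That specialization reduces everything to $X_{11}M=0$ and $X_{31}M=0$, solved by Penrose's theorem, exactly as you do. Your interpretive caveat is also warranted: the displayed family is precisely the set of solutions with $A_1X_1A_1=0$ (equivalently $X_{21}=0$), and for $r\ge 3$ there are solutions outside it (e.g.\ $r=3$ with an empty third block, and $X_{11}=E_{12}$, $X_{21}=E_{13}$, $X_{22}=E_{23}$ in $3\times 3$ matrix units), so the statement can only be read as describing the $X_{21}=0$ branch.
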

	
	\begin{example}[Homogeneous YBME when $ A_1^2=0$]\label{EX1} Recall Example \ref{Ex1}: let $A_1=J_2(0)\oplus 0_1=\left[\begin{array}{cc|c}
			0 & 1 & 0\\
			0 & 0 & 0\\
			\hline
			0 & 0 & 0
		\end{array}\right],$ and let the commutator matrix $Y_1$ be provided as
		$$Y_1=\left[\begin{array}{cc|c}
			y_{11} & y_{12} & y_{13}\\
			0 & -y_{11} & 0\\
			\hline
			0 & y_{32} & 0
		\end{array}\right].$$ Then the general solution to the corresponding Sylvester equation $A_1X_1-X_1A_1=Y_1$ is given as
		$$X_1=\left[\begin{array}{cc|c}
			x_{11} & x_{12} & x_{13}\\
			y_{11} & y_{12}+x_{11} & y_{13}\\
			\hline
			-y_{32} & x_{32} & x_{33}
		\end{array}\right].$$
		Now, let us filter those matrices $X_1$ that are solutions to the homogeneous YBME $A_1X_1A_1=X_1A_1X_1$. We arrive at
		$$\left[\begin{array}{cc|c}
			0 & y_{11} & 0\\
			0 & 0 & 0\\
			\hline
			0 & 0 & 0
		\end{array}\right]=\left[\begin{array}{cc|c}
			y_{11}x_{11} & x_{11}(y_{12}+x_{11}) & y_{13}x_{11}\\
			y_{11}^2 & y_{11}(y_{12}+x_{11}) & y_{11}y_{13}\\
			\hline
			-y_{11}y_{32} & -y_{32}(y_{12}+x_{11}) & -y_{13}y_{32}
		\end{array}\right],$$
		which after some computation leads to the following four solutions to the YBME (we will update the corresponding commutators $Y_1$ for the sake of completeness):
		\begin{align*}
			1)\;& X_1(1)=\left[\begin{array}{cc|c}
				-y_{12} & x_{12} & x_{13}\\
				0 & 0 & 0\\
				\hline
				-y_{32} & x_{32} & x_{33}
			\end{array}\right],\qquad Y_1(1)=\left[\begin{array}{cc|c}
				0 & y_{12} & 0\\
				0 & 0 & 0\\
				\hline
				0 & y_{32} & 0
			\end{array}\right],\\
			2)\;& X_1(2)=\left[\begin{array}{cc|c}
				0 & x_{12} & x_{13}\\
				0 & y_{12} & y_{13}\\
				\hline
				0 & x_{32} & x_{33}
			\end{array}\right],\qquad Y_1(2)=\left[\begin{array}{cc|c}
				0 & y_{12} & y_{13}\\
				0 & 0 & 0\\
				\hline
				0 & 0 & 0
			\end{array}\right],\\
			3)\;& X_1(3)=\left[\begin{array}{cc|c}
				0 & x_{12} & x_{13}\\
				0 & y_{12} & 0\\
				\hline
				0 & x_{32} & x_{33}
			\end{array}\right],\qquad Y_1(3)=\left[\begin{array}{cc|c}
				0 & y_{12} & 0\\
				0 & 0 & 0\\
				\hline
				0 & 0 & 0
			\end{array}\right],\\
			4)\;& X_1(4)=\left[\begin{array}{cc|c}
				-y_{12} & x_{12} & x_{13}\\
				0 & 0 & 0\\
				\hline
				0 & x_{32} & x_{33}
			\end{array}\right],\qquad Y_1(4)=\left[\begin{array}{cc|c}
				0 & y_{12} & 0\\
				0 & 0 & 0\\
				\hline
				0 & 0 & 0
			\end{array}\right].
		\end{align*}
		We emphasize the the immediate checking shows that the solutions $X_1(2),$ $X_1(3),$ $X_1(4)$ are solutions not only to the Sylvester equations (for $Y_1(2),$ $Y_1(3),$ $Y_1(4)$, respectively), but for the homogeneous YBME, too!\hfill$\clubsuit$
	\end{example}

	\section{The Equation $A_2 X_4 A_2 =  X_4 A_2 X_4+\widehat{Y_3} A_1 \widehat{Y_2}$}\label{fourth}
	In this section we solve the fourth equation from the system \eqref{XYsystem}:
	\begin{equation}
		\label{X432}
		A_2 X_4 A_2 = X_4 A_2 X_4+\widehat{Y_3} A_1 \widehat{Y_2}.
	\end{equation}
	We recall the commutator $Y_4$ for $A_2$ and $X_4$:
	\begin{equation}
		\label{Y2}
		Y_4:=A_2X_4-X_4A_2,
	\end{equation}
	which, in general, can be zero, but if it is not then it must satisfy some natural conditions like \eqref{tY2}--\eqref{0tY2} as before. Since $A_2$ is invertible and satisfies $A_2^n=tA_2^2$, we have the following result:
	\begin{theorem}\label{Th4.1.}
		Let $
		A_2=T\operatorname{diag}(\lambda_1 I_{r_1},\dots,\lambda_s I_{r_s})T^{-1}
		$, where $\lambda_1,\dots,\lambda_s$ are distinct roots of $x^{n-2}=t$ (possibly not all of them) and
		$r_1+\cdots+r_s=N_2$.
		The equation
		$
		A_2X_4-X_4A_2=Y_4
		$
		is consistent if and only if
		\begin{equation}\label{cons_cond_inv}
			(\forall k\in\{1,2,...,s\})\;P_k Y_4P_k = 0,
		\end{equation}
		where $P_k$ is the spectral projector of $A$ onto the $\lambda_k-$eigenspace.
		
		Then all the solutions are given by
		\begin{equation}\label{all_sol_inv}
			X_4=\sum_{\substack{i,j=1\\ i\neq j}}^s \frac{1}{\lambda_i-\lambda_j}P_iY_4P_j+\sum_{i=1}^{s}P_i Z_i P_i,\quad \forall Z_i\in\CC^{N_2\times N_2},i=1,...,s.
		\end{equation}
		Equivalently, in a basis where $A_2=\operatorname{diag}(\lambda_1 I_{r_1},\ldots,\lambda_s I_{r_s})$, the homogeneous part
		is an arbitrary block-diagonal matrix $\operatorname{diag}(U_1,\ldots,U_s)$ with $U_i\in\CC^{r_i\times r_i}$.
	\end{theorem}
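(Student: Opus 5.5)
Since $A_2^{n-2}=tI$ and the polynomial $x^{n-2}-t$ has $n-2$ distinct nonzero roots (because $t\neq 0$), $A_2$ is annihilated by a polynomial with simple roots. Hence $A_2$ is diagonalizable, with eigenvalues among these roots, which justifies the form $A_2=T\operatorname{diag}(\lambda_1 I_{r_1},\dots,\lambda_s I_{r_s})T^{-1}$. I will work with the spectral projectors $P_k=T(0\oplus\cdots\oplus I_{r_k}\oplus\cdots\oplus 0)T^{-1}$. These satisfy $P_iP_j=\delta_{ij}P_i$, $\sum_k P_k=I$, and $A_2=\sum_k\lambda_kP_k$, so $A_2P_k=P_kA_2=\lambda_kP_k$. (I read the ``$P_k$ of $A$'' in the statement as the projectors of $A_2$, acting on $\R(A^2)$.) The whole argument is to compress the Sylvester equation by these projectors, which decouples it into scalar-type block equations.

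\emph{Necessity.} I will multiply $A_2X_4-X_4A_2=Y_4$ on the left by $P_i$ and on the right by $P_j$. This gives
$$P_iY_4P_j=\lambda_iP_iX_4P_j-\lambda_jP_iX_4P_j=(\lambda_i-\lambda_j)P_iX_4P_j.$$
For $i=j$ this yields $P_kY_4P_k=0$, which is \eqref{cons_cond_inv}. For $i\neq j$ we have $\lambda_i\neq\lambda_j$, so it forces $P_iX_4P_j=\frac{1}{\lambda_i-\lambda_j}P_iY_4P_j$. Decomposing $X_4=\sum_{i,j}P_iX_4P_j$ then shows that every solution has the form \eqref{all_sol_inv} with $Z_i:=X_4$ (or any matrix having the same diagonal compression $P_iX_4P_i$). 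This proves that condition \eqref{cons_cond_inv} is necessary and that every solution is of the stated form.

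\emph{Sufficiency.} Assume \eqref{cons_cond_inv} and define $X_4$ by \eqref{all_sol_inv} with arbitrary $Z_i$. Using $A_2P_i=\lambda_iP_i$ and $P_jA_2=\lambda_jP_j$, the off-diagonal terms contribute
$$\sum_{i\neq j}\frac{\lambda_i-\lambda_j}{\lambda_i-\lambda_j}P_iY_4P_j=\sum_{i\neq j}P_iY_4P_j$$
to $A_2X_4-X_4A_2$. The diagonal terms contribute $\lambda_iP_iZ_iP_i-P_iZ_iP_i\lambda_i=0$. By \eqref{cons_cond_inv} we have $\sum_{i\neq j}P_iY_4P_j=\sum_{i,j}P_iY_4P_j=Y_4$, so $X_4$ is indeed a solution.

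Finally, in the basis where $A_2$ is diagonal, each $P_iZ_iP_i$ is an arbitrary matrix supported in the $(i,i)$ diagonal block. This gives the equivalent description of the homogeneous part as $\operatorname{diag}(U_1,\ldots,U_s)$. There is no real obstacle here. The only point needing care is the diagonalizability of $A_2$ and the distinctness of the $\lambda_k$, both of which come from the separability of $x^{n-2}-t$ when $t\neq0$.
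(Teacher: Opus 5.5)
Your proof is correct and follows essentially the same route as the paper: you compress the Sylvester equation by the spectral projectors $P_i(\cdot)P_j$ using $A_2P_k=P_kA_2=\lambda_kP_k$, read off the consistency condition from the diagonal compressions, and verify the explicit formula directly. The differences are cosmetic. You get the off-diagonal blocks of every solution straight from $P_iY_4P_j=(\lambda_i-\lambda_j)P_iX_4P_j$, rather than passing through $X_h=X_4-X_p$ and a separate characterization of the commutant. You also justify the diagonalizability of $A_2$ from the separability of $x^{n-2}-t$, which the statement simply assumes.
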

	
	\begin{proof}
		We use the standard identities for spectral projectors:
		\[
		P_iP_j=\delta_{ij}P_i,\qquad \sum_{i=1}^s P_i = I,\qquad
		A_2=\sum_{i=1}^s \lambda_i P_i,\qquad A_2P_i=P_iA_2=\lambda_i P_i.
		\]
		$(\Rightarrow):$ Assume there exists $X_4$ such that $A_2X_4-X_4A_2=Y_4$.
		Fix $k\in\{1,\ldots,s\}$ and multiply the equation on the left and right by $P_k$; we obtain
		$$
		P_kY_4P_k=(P_kA_2)X_4P_k - P_kX_4(A_2P_k)
		=(\lambda_k P_k)X_4P_k - P_kX_4(\lambda_k P_k)=0,
		$$
		proving \eqref{cons_cond_inv}.
		
		\noindent$(\Leftarrow):$ Assume \eqref{cons_cond_inv} holds. Decompose $Y_4$ via $\sum_i P_i=I$:
		\[
		Y_4 = \Big(\sum_{i=1}^s P_i\Big)Y_4\Big(\sum_{j=1}^s P_j\Big)
		= \sum_{i,j=1}^s P_iY_4P_j
		= \sum_{i\neq j} P_iY_4P_j \;+\; \sum_{k=1}^s P_kY_4P_k.
		\]
		By \eqref{cons_cond_inv} the diagonal part vanishes, so
		\begin{equation}\label{eq:Y-offdiag}
			Y_4 = \sum_{i\neq j} P_iY_4P_j.
		\end{equation}
		Define
		\[
		X_p := \sum_{\substack{i,j=1\\ i\neq j}}^{s}\frac{1}{\lambda_i-\lambda_j}\,P_iY_4P_j.
		\]
		For each $i\neq j$, we compute
		\[
		(A_2P_i)Y_4P_j - P_iY_4(P_j A_2)
		= (\lambda_i P_i) Y_4P_j - P_iY_4 (\lambda_j P_j)
		= (\lambda_i-\lambda_j)P_iY_4P_j.
		\]
		Therefore,
		\[
		A_2\Big(\frac{1}{\lambda_i-\lambda_j}P_iY_4P_j\Big)
		-\Big(\frac{1}{\lambda_i-\lambda_j}P_iY_4P_j\Big)A_2
		= P_iY_4P_j.
		\]
		Summing over all $i\neq j$ yields
		\[
		A_2X_p - X_pA_2 = \sum_{i\neq j} P_iY_4P_j = Y_4.
		\]
		Hence the equation is consistent and $X_p$ is a particular solution.
		
		Now we address homogeneous solutions and the general form.
		Let $X_4$ be any solution and set $X_h := X_4 - X_p$. Then
		$
		A_2X_h - X_hA_2 = 0,
		$
		i.e., $X_h$ commutes with $A_2$.
		
		We claim that
		\begin{equation}\label{eq:block-characterization}
			[A_2,X]=0 \quad\Longleftrightarrow\quad P_iXP_j=0\ \text{for all}\ i\neq j,
			\quad\Longleftrightarrow\quad
			X=\sum_{i=1}^s P_iXP_i.
		\end{equation}
		Indeed, write $X=\sum_{i,j}P_iXP_j$. For each pair $(i,j)$,
		\[
		(A_2P_i)XP_j - P_iX(P_jA_2)
		= (\lambda_i-\lambda_j)P_iXP_j.
		\]
		If $[A_2,X]=0$, then summing over $(i,j)$ gives
		\[
		0=[A_2,X]=\sum_{i,j}(\lambda_i-\lambda_j)P_iXP_j.
		\]
		Multiplying on the left by $P_i$ and on the right by $P_j$ isolates a single term:
		\[
		0=P_i[A_2,X]P_j=(\lambda_i-\lambda_j)P_iXP_j.
		\]
		Since $\lambda_i\neq\lambda_j$ for $i\neq j$, we obtain $P_iXP_j=0$ whenever $i\neq j$,
		which proves the first implication in \eqref{eq:block-characterization}; the remaining equivalences are immediate.
		
		Consequently, any homogeneous solution has the form
		\[
		X_h=\sum_{i=1}^s P_i X_h P_i.
		\]
		Conversely, for arbitrary matrices $Z_1,\ldots,Z_s\in\mathbb{C}^{N_2\times N_2}$ the matrix
		$X_h:=\sum_{i=1}^s P_iZ_iP_i$ satisfies $[A_2,X_h]=0$, because it contains no off-diagonal
		blocks relative to the spectral decomposition of $A_2$.
		Therefore, every solution is
		\[
		X_4=X_p+X_h
		=
		\sum_{\substack{i,j=1\\ i\neq j}}^{s}\frac{1}{\lambda_i-\lambda_j}\,P_iY_4P_j
		\;+\;
		\sum_{i=1}^{s} P_i Z_i P_i,
		\]
		which is exactly \eqref{all_sol_inv}.
	\end{proof}

	Now we can use the matrices $X_4$ from \eqref{all_sol_inv} to search for the solutions to \eqref{X432}. This can be simplified even further in the following sense:
	
	\begin{lemma}[Necessary condition for $A_2X_4A_2$ and $X_4A_2X_4$]
		Let $A_2$ be an invertible square matrix such that $A_2 ^n=tA_2^2$ holds for some $n>2$ and $t\neq0$. Let $X_4$ and $Y_4$ be provided such that \eqref{Y2} is satisfied. Then the following equalities hold:
		
		\begin{equation}\label{AAXAXA}
			A_2^2(X_4A_2X_4)A_2=A_2Y_4A_2X_4A_2+(A_2X_4A_2)^2.
		\end{equation}
		\begin{equation}\label{AXAXAA}A_2(X_4A_2X_4)A_2^2=(A_2X_4A_2)^2-A_2X_4A_2Y_4A_2.
		\end{equation}
		
	\end{lemma}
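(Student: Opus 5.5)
The plan is a direct algebraic manipulation. The only tool needed is the commutator relation \eqref{Y2}, which we use in the two equivalent forms
$$A_2X_4=Y_4+X_4A_2,\qquad X_4A_2=A_2X_4-Y_4 .$$
Neither the invertibility of $A_2$ nor the square-cyclic property $A_2^n=tA_2^2$ should be needed; they only describe the setting in which the identities will later be used.

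For \eqref{AAXAXA}, we first regroup the left-hand side as
$$A_2^2(X_4A_2X_4)A_2=A_2\,(A_2X_4)\,A_2X_4A_2 .$$
Next we replace the inner factor $A_2X_4$ by $Y_4+X_4A_2$. This splits the expression into two terms:
$$A_2Y_4A_2X_4A_2+A_2X_4A_2\,A_2X_4A_2 .$$
The second term is exactly $(A_2X_4A_2)^2$, which gives \eqref{AAXAXA}.

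For \eqref{AXAXAA}, the argument is symmetric, with the substitution made on the right-hand factor instead. We write
$$A_2(X_4A_2X_4)A_2^2=A_2X_4A_2\,(X_4A_2)\,A_2 ,$$
and replace $X_4A_2$ by $A_2X_4-Y_4$. This yields
$$A_2X_4A_2\,A_2X_4A_2-A_2X_4A_2Y_4A_2=(A_2X_4A_2)^2-A_2X_4A_2Y_4A_2,$$
which is \eqref{AXAXAA}.

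There is no real obstacle here. The only point requiring care is to regroup the products so that a factor $A_2X_4$ appears for the first identity, and a factor $X_4A_2$ for the second, adjacent to the rest of the word, before substituting. Since matrix multiplication is not commutative, the substitution must be made at exactly that position; with that done, the rest is bookkeeping of associativity.
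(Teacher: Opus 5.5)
Your proof is correct and rests on the same single substitution of the commutator relation $A_2X_4-X_4A_2=Y_4$ as the paper's proof. The paper starts from $(tA_2X_4A_2)^2$, rewrites the middle factor $tA_2^2$ as $A_2^n$, substitutes, and then uses $A_2^{n-1}=tA_2$, so the square-cyclic hypothesis and invertibility enter only to cancel out. Your direct regrouping avoids that detour and confirms, as you observe, that both identities hold for an arbitrary square matrix $A_2$.
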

	\begin{proof} We proceed to analyze the expression $X_4A_2X_4$. From $tA_2^2=A_2^n$ we have
		\begin{eqnarray}
			\begin{aligned}
				(tA_2X_4A_2)^2 &=t^2A_2X_4A_2^2X_4A_2=tA_2X_4A_2^nX_4A_2\\
				&=tA_2(A_2X_4-Y_4)A_2^{n-1}X_4A_2\\
				&=t^2A_2^2X_4A_2X_4A_2-t^2A_2Y_4A_2X_4A_2 \\
				&\Rightarrow A_2^2(X_4A_2X_4)A_2=A_2Y_4A_2X_4A_2+(A_2X_4A_2)^2.
			\end{aligned}
		\end{eqnarray}
		and
		\begin{eqnarray}
			\begin{aligned}
				(tA_2X_4A_2)^2 &=t^2A_2X_4A_2^2X_4A_2=tA_2X_4A_2^nX_4A_2\\
				&=tA_2X_4A_2^{n-1}(X_4A_2+Y_4)A_2\\
				&=t^2A_2X_4A_2^{n-1}X_4A_2^2+t^2A_2X_4A_2^{n-1}Y_4A_2\\
				&\Rightarrow A_2(X_4A_2X_4)A_2^2=(A_2X_4A_2)^2-A_2X_4A_2Y_4A_2.
			\end{aligned}
		\end{eqnarray}
	\end{proof}
	
	\begin{theorem}[On the existence of $X_4$] \label{SylcondtX4} Let $A_2$ be an invertible square matrix such that $A_2 ^n=tA_2^2$ holds for some $n>2$ and $t\neq0$, and let $Y_4$ be arbitrary. 
		
		If the system \eqref{X432}--\eqref{Y2} is consistent, then every solution $X_4$ to \eqref{X432}--\eqref{Y2} also solves the Sylvester equation
		\begin{equation}\label{XYZSyl}
			Y_4A_2X_4+X_4A_2Y_4=A_2Y_4A_2+\left(\widehat{Y_3} A_1 \widehat{Y_2}\right)A_2-A_2\left(\widehat{Y_3} A_1 \widehat{Y_2}\right).
		\end{equation}
		Conversely, if the equation \eqref{XYZSyl} is unsolvable for $X_4$, then the system \eqref{X432}--\eqref{Y2} is inconsistent. 
	\end{theorem}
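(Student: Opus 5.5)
The plan is a direct algebraic manipulation. Writing $B:=\widehat{Y_3} A_1 \widehat{Y_2}$, equation \eqref{X432} says $X_4A_2X_4=A_2X_4A_2-B$. The idea is to produce the left-hand side of \eqref{XYZSyl} by expanding the commutator \eqref{Y2}. We then eliminate every occurrence of the quadratic term $X_4A_2X_4$ using \eqref{X432}, which leaves an expression that is linear in $X_4$ and collapses back onto $Y_4$. The square-cyclic property of $A_2$, and hence the auxiliary identities \eqref{AAXAXA}--\eqref{AXAXAA}, should not actually be needed; only \eqref{X432} and \eqref{Y2} enter.

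\emph{Step 1.} Assume $X_4$ solves both \eqref{X432} and \eqref{Y2}. Substitute $Y_4=A_2X_4-X_4A_2$ into $Y_4A_2X_4+X_4A_2Y_4$. The two middle terms $\mp X_4A_2^2X_4$ cancel, giving
$$Y_4A_2X_4+X_4A_2Y_4=A_2X_4A_2X_4-X_4A_2X_4A_2=A_2(X_4A_2X_4)-(X_4A_2X_4)A_2 .$$

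\emph{Step 2.} Replace $X_4A_2X_4$ by $A_2X_4A_2-B$ in both terms. This yields
$$A_2^2X_4A_2-A_2B-A_2X_4A_2^2+BA_2=A_2(A_2X_4-X_4A_2)A_2+BA_2-A_2B .$$

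\emph{Step 3.} Use \eqref{Y2} once more to recognize the first term as $A_2Y_4A_2$. This gives exactly the right-hand side of \eqref{XYZSyl}, so every solution of the system \eqref{X432}--\eqref{Y2} also solves \eqref{XYZSyl}.

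The converse statement is just the contrapositive of what Steps 1--3 prove. If \eqref{XYZSyl} has no solution $X_4$, then no $X_4$ can satisfy \eqref{X432}--\eqref{Y2} simultaneously.

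I expect no real obstacle. The only point needing care is the bookkeeping of the order of factors in Steps 1 and 2, since all matrices are non-commuting; this decides whether the inhomogeneous term comes out as $BA_2-A_2B$ rather than its negative. Invertibility of $A_2$ and $A_2^{n-2}=tI$ are used only in that they are standing hypotheses. I would also note in passing that \eqref{XYZSyl} is a genuine (generalized) Sylvester equation in $X_4$ whose coefficients $Y_4A_2$ and $A_2Y_4$ are known once $Y_4$ is fixed. Its solvability can therefore be tested via vectorization and Theorem \ref{ThmPenrose}, which is what makes the criterion useful.
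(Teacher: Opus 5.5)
Your argument is correct, and it takes a more direct route than the paper. The paper never expands $Y_4A_2X_4+X_4A_2Y_4$ directly. Instead, with your $B=\widehat{Y_3}A_1\widehat{Y_2}$, it proceeds in four steps:
\begin{enumerate}
\item It rewrites \eqref{Y2} as $A_2(A_2X_4A_2)-(A_2X_4A_2)A_2=A_2Y_4A_2$.
\item It substitutes \eqref{X432} to obtain $A_2(X_4A_2X_4)-(X_4A_2X_4)A_2=A_2Y_4A_2+BA_2-A_2B$.
\item It pre- and post-multiplies this by $A_2$ and evaluates $A_2^2(X_4A_2X_4)A_2$ and $A_2(X_4A_2X_4)A_2^2$ through the auxiliary identities \eqref{AAXAXA}--\eqref{AXAXAA}, whose proof is routed through $A_2^n=tA_2^2$.
\item It cancels the outer factors of $A_2$ using invertibility.
\end{enumerate}
The difference of \eqref{AAXAXA} and \eqref{AXAXAA} is exactly your Step~1 identity with an extra factor $A_2$ on each side. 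So the paper's proof is in effect your computation sandwiched between two copies of $A_2$, which it then has to strip off.

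Your route buys generality. You never cancel $A_2$, so the implication and its contrapositive hold for an arbitrary square $A_2$ and an arbitrary inhomogeneous term $B$, with neither invertibility nor the square-cyclic property used. In fact the factors of $t$ cancel in the paper's proof of the auxiliary lemma, and \eqref{AAXAXA}--\eqref{AXAXAA} follow from \eqref{Y2} alone. For this statement, the paper's detour serves only to tie the result to the preceding lemma.
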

	\begin{proof} 
		By rewriting $A_2X_4A_2=X_4A_2X_4+\widehat{Y_3} A_1 \widehat{Y_2}$ we get
		\begin{eqnarray}
			\label{X4A2X4}
			\begin{aligned}
				&A_2X_4-X_4A_2=Y_4\Leftrightarrow A_2(A_2X_4A_2)-(A_2X_4A_2)A_2=A_2Y_4A_2 \\
				&\Leftrightarrow A_2(X_4A_2X_4+\widehat{Y_3} A_1 \widehat{Y_2})-(X_4A_2X_4+\widehat{Y_3} A_1 \widehat{Y_2})A_2=A_2Y_4A_2\\
				&\Leftrightarrow A_2(X_4A_2X_4)-(X_4A_2X_4)A_2=A_2Y_4A_2+\left(\widehat{Y_3} A_1 \widehat{Y_2}\right)A_2-A_2\left(\widehat{Y_3} A_1 \widehat{Y_2}\right)\\
				&\Leftrightarrow A_2^2(X_4A_2X_4)A_2-A_2(X_4A_2X_4)A_2^2=\\
				&\qquad A_2^2Y_4A_2^2+A_2\left(\widehat{Y_3} A_1 \widehat{Y_2}\right)A_2^2-A_2^2\left(\widehat{Y_3} A_1 \widehat{Y_2}\right)A_2.
			\end{aligned}
		\end{eqnarray}
		Combining the equalities \eqref{AAXAXA} and \eqref{AXAXAA} gives
		\begin{equation}
			\label{YZsquared}
			A_2Y_4A_2X_4A_2+A_2X_4A_2Y_4A_2=A_2^2Y_4A_2^2+A_2\left(\widehat{Y_3} A_1 \widehat{Y_2}\right)A_2^2-A_2^2\left(\widehat{Y_3} A_1 \widehat{Y_2}\right)A_2,
		\end{equation}
		that is
		\begin{equation}
			\label{YZ}
			Y_4A_2X_4+X_4A_2Y_4=A_2Y_4A_2+\left(\widehat{Y_3} A_1 \widehat{Y_2}\right)A_2-A_2\left(\widehat{Y_3} A_1 \widehat{Y_2}\right).
		\end{equation}
		The converse statement follows immediately.
	\end{proof}
	
	\begin{corollary}[Homogeneous YBME with invertible square-cyclic $A_2$]\label{invhomYBME} Let $A_2$ be an invertible square matrix such that $A_2 ^n=tA_2^2$ holds for some $n>2$ and $t\neq0$, and let $Y_4$ be arbitrary. If the system 
		\begin{equation}\label{systregA2}\begin{cases}A_2X_4A_2=X_4A_2X_4,\\A_2X_4-X_4A_2=Y_4\end{cases}
		\end{equation} is consistent, then every solution $X_4$ also solves the Sylvester equation
		\begin{equation}\label{regSylA2}
			Y_4A_2X_4+X_4A_2Y_4=A_2Y_4A_2.
		\end{equation}
		Conversely, if the above Sylvester equation \eqref{regSylA2} is unsolvable for $X_4$, then the afore-mentioned system \eqref{systregA2} is inconsistent. 
	\end{corollary}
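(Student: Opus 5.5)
This corollary is the special case of Theorem \ref{SylcondtX4} in which the inhomogeneous term vanishes, so I would take the coupling matrix $B:=\widehat{Y_3}A_1\widehat{Y_2}$ to be zero. Then \eqref{X432} becomes $A_2X_4A_2=X_4A_2X_4$, the system \eqref{X432}--\eqref{Y2} becomes exactly \eqref{systregA2}, and the right-hand side of \eqref{XYZSyl} reduces to $A_2Y_4A_2$, which gives \eqref{regSylA2}. Formally, one can invoke the theorem with the abstract inhomogeneity $B$ in place of $\widehat{Y_3}A_1\widehat{Y_2}$ (its proof never used the specific form of that matrix) and set $B=0$. The converse statement is then just the contrapositive.

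For a self-contained argument I would redo the short computation directly. First, from $A_2X_4-X_4A_2=Y_4$, multiply by $A_2$ on both sides to get
\[
A_2(A_2X_4A_2)-(A_2X_4A_2)A_2=A_2Y_4A_2.
\]
Next, substitute $A_2X_4A_2=X_4A_2X_4$. This gives $A_2(X_4A_2X_4)-(X_4A_2X_4)A_2=A_2Y_4A_2$. Multiplying once more by $A_2$ on the left and on the right yields
\[
A_2^2(X_4A_2X_4)A_2-A_2(X_4A_2X_4)A_2^2=A_2^2Y_4A_2^2.
\]

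The main step is to evaluate the two terms on the left using the preceding Lemma. By \eqref{AAXAXA} and \eqref{AXAXAA}, which rely only on \eqref{Y2} and $A_2^n=tA_2^2$ with $t\neq0$, their difference equals $A_2Y_4A_2X_4A_2+A_2X_4A_2Y_4A_2$, because the $(A_2X_4A_2)^2$ terms cancel. This gives
\[
A_2\left(Y_4A_2X_4+X_4A_2Y_4\right)A_2=A_2\left(A_2Y_4A_2\right)A_2.
\]
Finally, cancel the outer factors using the invertibility of $A_2$, which yields \eqref{regSylA2}. I expect no real obstacle; the only point to watch is that the Lemma applies verbatim, since it does not depend on whether the Yang--Baxter equation is homogeneous.
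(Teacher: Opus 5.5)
Your proposal is correct and follows the paper's route. The corollary is exactly Theorem~\ref{SylcondtX4} with the inhomogeneous term $\widehat{Y_3}A_1\widehat{Y_2}$ set to zero (your remark that that proof never uses the specific form of this term is the right justification), and your self-contained computation reproduces that theorem's proof via \eqref{AAXAXA}--\eqref{AXAXAA} followed by cancelling the outer factors $A_2$; as a side note, the expansion $Y_4A_2X_4+X_4A_2Y_4=A_2(X_4A_2X_4)-(X_4A_2X_4)A_2$ holds identically, so \eqref{regSylA2} follows even without invertibility or the square-cyclic hypothesis.
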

	
	\begin{corollary} Let $A_2$ be an invertible square matrix such that $A_2 ^n=tA_2^2$ holds for some $n>2$ and $t\neq0$, and let $Y_4$ be arbitrary. 
		
		If $Y_4$ is invertible, and provided in the manner that $\sigma(A_2Y_4)$ does not contain two opposite points, then \eqref{XYZSyl} has a unique solution $X_4$, and the system \eqref{X432}--\eqref{Y2} is either inconsistent, or has only the one solution and that is $X_4$.
	\end{corollary}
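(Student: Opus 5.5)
The plan is to view \eqref{XYZSyl} as a linear equation in $X_4$ of the form $UX_4+X_4V=W$ with $U=Y_4A_2$, $V=A_2Y_4$ and $W$ the right-hand side of \eqref{XYZSyl}. The first step is to factor out the invertible matrix $Y_4$. Writing $Y_4A_2X_4+X_4A_2Y_4=Y_4\left(A_2X_4+Y_4^{-1}X_4A_2Y_4\right)$ does not separate cleanly, so a better choice is a change of variable. Put $X_4=Y_4^{-1}\widetilde X$, so that $Y_4A_2Y_4^{-1}\widetilde X+Y_4^{-1}\widetilde XA_2Y_4=W$. Multiplying on the left by $Y_4$ gives
$$(Y_4A_2)\widetilde X+\widetilde X(A_2Y_4)=Y_4W.$$
Alternatively, one can work directly with \eqref{XYZSyl}, since it is already of the form $UX_4-X_4(-V)=W$ with $U=Y_4A_2$ and $V=A_2Y_4$. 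I will use this direct form.

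The second step is to invoke the classical Sylvester criterion: $UZ-ZL=D$ has a unique solution for every $D$ iff $\sigma(U)\cap\sigma(L)=\emptyset$. This is the same fact already used for the second and third equations in \eqref{Sylsystem}. Here $L=-A_2Y_4$. Because $Y_4$ is invertible, $Y_4A_2=Y_4(A_2Y_4)Y_4^{-1}$ is similar to $A_2Y_4$, so $\sigma(Y_4A_2)=\sigma(A_2Y_4)$. (Even without invertibility, $AB$ and $BA$ share their nonzero eigenvalues.) Hence
$$\sigma(U)\cap\sigma(L)=\sigma(A_2Y_4)\cap\bigl(-\sigma(A_2Y_4)\bigr).$$
This intersection is empty exactly when $\sigma(A_2Y_4)$ contains no pair $\mu,-\mu$. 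Since $A_2$ and $Y_4$ are invertible, $0\notin\sigma(A_2Y_4)$, so the degenerate case $\mu=-\mu=0$ is excluded automatically. Therefore \eqref{XYZSyl} has a unique solution, call it $X_4^\ast$. Vectorised, it is $\left(I\otimes Y_4A_2+(A_2Y_4)^T\otimes I\right)^{-1}\operatorname{vec}(W)$, and the invertibility of that Kronecker sum is exactly the spectral condition.

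The final step is to apply Theorem \ref{SylcondtX4}. If the system \eqref{X432}--\eqref{Y2} is consistent, every solution of it solves \eqref{XYZSyl}, and so it must equal $X_4^\ast$. Hence the system is either inconsistent or has exactly the one solution $X_4^\ast$; whether $X_4^\ast$ actually satisfies \eqref{X432} and \eqref{Y2} must still be checked directly.

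The only step needing care is the spectral identity $\sigma(Y_4A_2)=\sigma(A_2Y_4)$ together with the correct sign in the Sylvester criterion. The rest is a direct citation of Theorem \ref{SylcondtX4}.
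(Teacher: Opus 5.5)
Your argument is correct and is essentially the paper's proof. Both show $\sigma(Y_4A_2)=\sigma(A_2Y_4)$, conclude $\sigma(Y_4A_2)\cap\sigma(-A_2Y_4)=\emptyset$ so that \eqref{XYZSyl} is a regular Sylvester equation with a unique solution, and then invoke Theorem \ref{SylcondtX4}; your similarity $Y_4A_2=Y_4(A_2Y_4)Y_4^{-1}$ justifies the spectral equality more cleanly than the paper's appeal to the Gel'fand transform, and your observation that invertibility excludes the self-opposite point $0$ is a useful detail. You should delete the abandoned change of variables, though: left-multiplying by $Y_4$ gives $Y_4^2A_2Y_4^{-1}\widetilde X+\widetilde XA_2Y_4=Y_4W$, not $(Y_4A_2)\widetilde X+\widetilde X(A_2Y_4)=Y_4W$, so that displayed equation is wrong even though you never use it.
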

	\begin{proof} If $Y_4$ is invertible, then so are the matrices $A_2Y_4$ and $Y_4A_2$. Moreover, by the Gel'fand transform of commutative Banach algebras, it follows that
		$$\sigma(A_2Y_4)\cup\{0\}=\sigma(Y_4A_2)\cup\{0\}$$
		which implies that 
		$$\sigma(A_2Y_4)=\sigma(Y_4A_2).$$
		Furthermore, the assumption that $\sigma(A_2Y_4)$ does not contain two opposite points implies that
		$$\emptyset=\sigma(A_2Y_4)\cap\sigma(-A_2Y_4)\Leftrightarrow \sigma(Y_4A_2)\cap\sigma(-A_2Y_4)=\emptyset,$$
		therefore \eqref{XYZSyl} is a regular Sylvester equation, with a unique solution $X_4$. Then by the previous theorem (Theorem \ref{SylcondtX4}), the system \eqref{X432}--\eqref{Y2} is either inconsistent, or has only a unique solution and that is $X_4$.
	\end{proof}
	
	On the other hand,  if $Y_4$ is chosen in the manner that 
	$$\sigma(Y_4A_2)\cap\sigma(-A_2Y_4)\neq\emptyset,$$
	then \eqref{XYZSyl} (and \eqref{regSylA2}) could be inconsistent, or it could have infinitely many solutions. We refer to the results obtained e.g. in \cite{DIN}, \cite{NCDBDD2}, \cite{BDD}, \cite{BDND1}.

	\section{Some special cases}
	As previously mentioned, our results hold even when the entire matrix $Y$, or some of its blocks are zeros. In those instances, the calculations conducted above simplify drastically. Below we enlist some of them.
	
	\subsection{Block-diagonal matrix $Y$}
	
	If the matrix $Y$ is block-diagonal in the representation \eqref{Ymat}, then $Y_2=0$ and $Y_3=0$, which implies that $X_2=0$ and $X_3=0$ (zeros in the appropriate operator spaces), thus the matrix representation of $X$ is block-diagonal with respect to \eqref{CN} as well, that is,
	\begin{equation}
		\label{blockdiagX}X=\mat{X_1}{0}{0}{X_4}.
	\end{equation} 
	Also note that the converse statement holds, i.e., if $X$ is in the form \eqref{blockdiagX} then $Y=\mat{Y_1}{0}{0}{Y_4}$, by the virtue of \eqref{AXXAYmat}. However, note that this does not imply that the solutions of the form \eqref{blockdiagX}  are automatically commuting ones, although all commuting solutions are obtained within this class (that is, when $Y=0$). Accordingly, the system  \eqref{XYsystem}  reduces to
	\begin{equation}\label{systemred}
		\begin{cases} A_1X_1A_1=X_1A_1X_1\\A_2 X_4 A_2=X_4A_2X_4.
		\end{cases}
	\end{equation}
	The two equations in \eqref{systemred} are independent from one another. Each of these equations can be solved by the techniques described in this paper: in particular, for the first equation one applies Section \ref{first} Theorem \ref{YBME_nil_2}, while for the second equation one applies Section \ref{fourth} Corollary \ref{invhomYBME}. We point out that \cite{DZJD} also concerns the first equation $A_1X_1A_1=X_1A_1X_1$, however, these results are more oriented towards commuting solutions with only a (not very practical) hint on how to theoretically obtain the non-commuting solutions.

	\subsection{Commuting solutions: $Y=0$}\label{commuting}
	
	Specially, when $Y=0$, then the matrix $X$ retains the block form \eqref{blockdiagX}, however, the system \eqref{systemred} now concerns only the commuting solutions $X_1$ and $X_4$. Note that all the commuting solutions must be singular:
	\begin{theorem} \label{singular}
		If $A_2$ is an invertible matrix, then all nontrivial commuting solutions to $A_2X_4A_2=X_4A_2X_4$ are singular.
	\end{theorem}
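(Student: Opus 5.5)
The plan is to use commutativity to collapse the cubic equation into a quadratic relation that factors. We take ``nontrivial'' in the sense customary for \eqref{YBME}, namely $X_4\neq 0$ and $X_4\neq A_2$, since $0$ and $A_2$ always solve $A_2X_4A_2=X_4A_2X_4$. We then argue by contradiction: suppose $X_4$ is a commuting solution, i.e. $Y_4=A_2X_4-X_4A_2=0$, and that $X_4$ is invertible.

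First, we move every $A_2$ to the left using $A_2X_4=X_4A_2$. The left-hand side becomes $A_2X_4A_2=A_2^2X_4$. The right-hand side becomes $X_4A_2X_4=A_2X_4^2$. Hence the equation reads $A_2^2X_4=A_2X_4^2$. Since $A_2$ is invertible, multiplying on the left by $A_2^{-1}$ yields
$$A_2X_4=X_4^2,\qquad\text{i.e.}\qquad X_4(X_4-A_2)=0,$$
where we again used commutativity to write $A_2X_4=X_4A_2$.

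Second, if $X_4$ were invertible, left multiplication by $X_4^{-1}$ would force $X_4=A_2$, which is the trivial solution. So every commuting solution other than $A_2$ satisfies $X_4(X_4-A_2)=0$ with $X_4-A_2\neq 0$. Any nonzero column of $X_4-A_2$ then lies in $\N(X_4)$, so $X_4$ is singular. Finally, $X_4=0$ is excluded by nontriviality, which completes the argument.

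There is no real obstacle here. The only point needing care is to state precisely which solutions count as trivial, because $X_4=A_2$ is itself an invertible commuting solution. The claim is therefore exactly that the invertible commuting solutions reduce to $A_2$. Note that the square-cyclic property $A_2^{n-2}=tI$ is not needed; only the invertibility of $A_2$ is used. As a consistency check, the relation $X_4^2=A_2X_4$ also shows that $\sigma(X_4)\subseteq\{0\}\cup\sigma(A_2)$.
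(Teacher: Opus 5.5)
Your proposal is correct and matches the paper's proof: commutativity reduces the equation to $A_2X_4=X_4^2$, and invertibility of $X_4$ then forces $X_4=A_2$. The column argument and the exclusion of $X_4=0$ are redundant, since $0$ is singular anyway, but they do no harm.
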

	
	\begin{proof}
		It is easy to show that if we assume $X_4$ is a commuting solution, the equation is equivalent to $A_2X_4 = X_4^2 = X_4A_2$.
		If $X_4$ is invertible, it follows that $A_2X_4 = X_4^2$ is equivalent to $A_2 = X_4$. Ergo if $X_4\neq A_2$ then $X_4$ is singular.
	\end{proof}
	
	All commuting solutions are accounted for: for the square-nilpotent matrix $A_1$, all commuting solutions are collected in \cite{Dong2018}. For the invertible matrix $A_2$ (regardless of the square-cyclic condition), all commuting solutions are collected in \cite{NCDBDD2}, while, since $A_2$ can be diagonalized, one can use the paper \cite{DonDin2017} as well.

	\subsection{The case when $Y_2=0$ or $Y_3=0$}
	
	By assuming that $Y_2=0$ or $Y_3=0$ we again have the reduced equations from \eqref{systemred}, but with one more equation:
	\begin{equation}
		\label{Y3=0}
		\begin{cases}
			A_1X_1A_1=X_1A_1X_1\\
			X_1 A_1 \widehat{Y_2}  = A_1 \widehat{Y_2} A_2- \widehat{Y_2} A_2 X_4,\\
			A_2X_4A_2=X_4A_2X_4.
		\end{cases}
	\end{equation} 
	or
	\begin{equation}
		\label{Y2=0}
		\begin{cases}
			A_1X_1A_1=X_1A_1X_1\\
			X_4 A_2\widehat{Y_3} = A_2\widehat{Y_3} A_1-\widehat{Y_3} A_1 X_1,\\
			A_2X_4A_2=X_4A_2X_4.
		\end{cases}
	\end{equation} 
	Similalry as for the block-diagonal case, we solve the system \eqref{systemred} and find all its solutions $X_1$ and $X_4$, and afterwards we utilize Theorem \ref{coupled} to inspect whether the consistency criteria for the obtained $X_4$ and $X_1$ are met.
	
	\subsection{Anti-diagonal matrix $Y$}
	
	We now restrict our attention to the case where $Y$ is anti-diagonal with respect to the decomposition \eqref{CN}, i.e., $Y_1=0$ and $Y_4=0$, while $Y_2$ and $Y_3$ are arbitrary nonzero matrices.
	
	By the definition of $Y$ in \eqref{Ymat}, the vanishing diagonal blocks imply that the solution blocks $X_1$ and $X_4$ must commute with the corresponding blocks of $A$:
	\begin{equation}
		\label{commute_anti}
		A_1 X_1  = X_1 A_1 \quad \text{and} \quad A_2 X_4 = X_4 A_2.
	\end{equation}
	By Theorem \ref{nil_inv_Syl}, we have
	\begin{align}
		\widehat{Y_2} &= -Y_2 A_2^{-1} - A_1 Y_2 A_2^{-2}, \label{Y2trunc}\\
		\widehat{Y_3} &= A_2^{-1} Y_3 + A_2^{-2} Y_3 A_1. \label{Y3trunc}
	\end{align}
	The nilpotency of $A_1$ imposes strict consistency conditions on the input matrices $Y_2$ and $Y_3$. Utilizing $A_1 X_1=X_1 A_1$ and $A_1^2=0$ in the first equation of \eqref{XYsystem}, we obtain $X_1^2 A_1 = -\widehat{Y_2} A_2 \widehat{Y_3}$. Post-multiplying this by $A_1$ annihilates the left-hand side, yielding the constraint $\widehat{Y_2} A_2 \widehat{Y_3} A_1 = 0$. Substituting the expressions \eqref{Y2trunc} and \eqref{Y3trunc}, this condition reduces to:
	\begin{equation}
		\label{Y_consistency}
		(Y_2 A_2^{-1} + A_1 Y_2 A_2^{-2}) Y_3 A_1 = 0.
	\end{equation}
	Furthermore, we observe the fourth equation of \eqref{XYsystem}:
	$$ A_2 X_4 A_2 = X_4 A_2 X_4 + \widehat{Y_3} A_1 \widehat{Y_2}. $$
	Since $X_4$ commutes with $A_2$, the left-hand side is $X_4 A_2^2$ and the first term on the right is $X_4^2 A_2$. Evaluating the product $\widehat{Y_3} A_1 \widehat{Y_2}$ using \eqref{Y2trunc} and \eqref{Y3trunc}, and noting that terms containing $A_1^2$ vanish, we find:
	$$ \widehat{Y_3} A_1 \widehat{Y_2} = (A_2^{-1} Y_3 A_1) (-Y_2 A_2^{-1} - A_1 Y_2 A_2^{-2}) = -A_2^{-1} Y_3 A_1 Y_2 A_2^{-1}. $$
	Substituting this back into the main equation yields:
	$$ X_4 A_2^2 - X_4^2 A_2 = -A_2^{-1} Y_3 A_1 Y_2 A_2^{-1}. $$
	Since $A_2$ is invertible, we can multiply by $A_2$ on both sides and rearrange to obtain a quadratic equation for $X_4$:
	\begin{equation}
		\label{X4quadratic}
		X_4(A_2 - X_4)A_2^3 = -Y_3 A_1 Y_2.
	\end{equation}
	A necessary condition for the existence of $X_4$ in \eqref{X4quadratic} is that the matrix $-Y_3 A_1 Y_2$ must commute with $A_2$, as the left-hand side clearly does.
	
	Finally, the cross-terms in \eqref{XYsystem} provide linear coupling constraints. Pre-multiplying the second equation of \eqref{XYsystem} by $A_1$ (and using $A_1 X_1 A_1 = 0$) yields $A_1 Y_2 X_4 = 0$. Similarly, post-multiplying the third equation by $A_1$ yields $X_4 Y_3 A_1 = 0$. These conditions simplify the determination of the off-diagonal interactions.
	
	We summarize these findings in the following theorem:
	
	\begin{theorem}
		Let $A$ satisfy $A^n=tA^2$ with decomposition \eqref{cnA}, and let $Y$ be an anti-diagonal matrix $(Y_1=0, Y_4=0)$. If the system \eqref{XYsystem} is consistent then the following conditions are true:
		\begin{enumerate}
			\item \textbf{Conditions on $Y$:} The matrices $Y_2$ and $Y_3$ must satisfy:
			\begin{align}
				(Y_2 A_2^{-1} + A_1 Y_2 A_2^{-2}) Y_3 A_1 &= 0, \\
				Y_3 A_1 Y_2 A_2 &= A_2 Y_3 A_1 Y_2.
			\end{align}
			\item \textbf{Conditions on $X$:} There exist matrices $X_1$ and $X_4$ such that:
			\begin{align}
				X_1 A_1 &= A_1 X_1, \quad X_4 A_2 = A_2 X_4, \\
				A_1 Y_2 X_4 &= 0, \quad X_4 Y_3 A_1 = 0,
			\end{align}
			and they satisfy the algebraic equations:
			\begin{align}
				X_4(A_2 - X_4)A_2^3 &= -Y_3 A_1 Y_2, \label{quad_final}\\
				X_1^2 A_1 &= -\widehat{Y_2} A_2 \widehat{Y_3}, \\
				Y_2 X_4 &= A_1 Y_2 - X_1 A_1 Y_2 A_2^{-1}, \\
				X_4 Y_3 &= Y_3 A_1 - A_2^{-1} Y_3 A_1 X_1.
			\end{align}
		\end{enumerate}
	\end{theorem}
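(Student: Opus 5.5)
The plan is to take the decomposition \eqref{cnA}, \eqref{cnX}, \eqref{Ymat} as given and start from a solution $X$ of \eqref{XYsystem} with $Y_1=0$, $Y_4=0$. Every listed condition should then follow from the four equations of \eqref{XYsystem}, combined with:
\begin{itemize}
\item the commutation relations \eqref{commute_anti}, which follow from \eqref{Sylsystem} with $Y_1=Y_4=0$;
\item the explicit forms \eqref{Y2trunc}--\eqref{Y3trunc} of $\widehat{Y_2},\widehat{Y_3}$ from Theorem \ref{nil_inv_Syl};
\item $A_1^2=0$ and the invertibility of $A_2$.
\end{itemize}
I would handle the four equations one at a time, mirroring the discussion preceding the statement.

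\emph{First and fourth equations.} Since $A_1X_1=X_1A_1$ and $A_1^2=0$, we have $A_1X_1A_1=X_1A_1^2=0$ and $X_1A_1X_1=X_1^2A_1$. The first equation therefore gives $X_1^2A_1=-\widehat{Y_2}A_2\widehat{Y_3}$. Post-multiplying by $A_1$ kills the left-hand side, so $\widehat{Y_2}A_2\widehat{Y_3}A_1=0$. Inserting \eqref{Y2trunc}--\eqref{Y3trunc}, the $A_2^{-2}Y_3A_1$ part of $\widehat{Y_3}$ is annihilated by the trailing $A_1$, while $A_2\cdot A_2^{-1}=I$ handles the rest; this yields the first condition on $Y$.

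For the fourth equation, $X_4$ commutes with $A_2$, so the left-hand side is $X_4A_2^2$ and the first term on the right is $X_4^2A_2$. The product $\widehat{Y_3}A_1\widehat{Y_2}$ collapses to $-A_2^{-1}Y_3A_1Y_2A_2^{-1}$, because every term containing $A_1^2$ vanishes. Multiplying the resulting identity by $A_2$ on both sides and using $[X_4,A_2]=0$ gives \eqref{quad_final}. The left-hand side of \eqref{quad_final} is a polynomial expression in commuting matrices $X_4,A_2$, hence commutes with $A_2$. Therefore $Y_3A_1Y_2$ commutes with $A_2$, which is the second condition on $Y$.

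\emph{Second and third (coupling) equations.} Here I substitute \eqref{Y2trunc} explicitly. We have $A_1\widehat{Y_2}=-A_1Y_2A_2^{-1}$, so $X_1A_1\widehat{Y_2}=-X_1A_1Y_2A_2^{-1}$ and $A_1\widehat{Y_2}A_2=-A_1Y_2$. Also $\widehat{Y_2}A_2X_4=-Y_2X_4-A_1Y_2A_2^{-1}X_4$. The second equation becomes
$$-X_1A_1Y_2A_2^{-1}-Y_2X_4-A_1Y_2A_2^{-1}X_4=-A_1Y_2.$$
Pre-multiplying by $A_1$ and using $A_1X_1A_1=0$ gives $A_1Y_2X_4=0$. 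Since $A_2^{-1}$ commutes with $X_4$, the term $A_1Y_2A_2^{-1}X_4=A_1Y_2X_4A_2^{-1}$ then also vanishes, which leaves exactly $Y_2X_4=A_1Y_2-X_1A_1Y_2A_2^{-1}$.

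The third equation is handled symmetrically. We have $\widehat{Y_3}A_1=A_2^{-1}Y_3A_1$, $A_2\widehat{Y_3}A_1=Y_3A_1$, and $X_4A_2\widehat{Y_3}=X_4Y_3+A_2^{-1}X_4Y_3A_1$. Post-multiplying by $A_1$ gives $X_4Y_3A_1=0$. This removes the last term, so $X_4Y_3=Y_3A_1-A_2^{-1}Y_3A_1X_1$.

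The only delicate point, and the main obstacle, is the ordering in these coupling computations. One must extract $A_1Y_2X_4=0$ and $X_4Y_3A_1=0$ \emph{before} simplifying, and use that $X_4$ commutes with $A_2^{-1}$ so that the residual terms $A_1Y_2A_2^{-1}X_4$ and $A_2^{-1}X_4Y_3A_1$ vanish. With that done, every condition in the theorem has been derived as a necessary consequence of consistency, as claimed.
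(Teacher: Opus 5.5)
Your proposal is correct and follows essentially the same route as the paper's derivation preceding the theorem. Both use the commutation relations coming from $Y_1=Y_4=0$, the truncated forms of $\widehat{Y_2}$ and $\widehat{Y_3}$, post-multiplication of the first equation by $A_1$, rearrangement of the fourth equation into the quadratic, and pre-/post-multiplication of the coupling equations by $A_1$. You go slightly further by explicitly deriving the last two identities for $Y_2X_4$ and $X_4Y_3$, using that $X_4$ commutes with $A_2^{-1}$ to kill the residual terms; the paper lists these identities without that detail.
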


	\section{Worked examples}

	\begin{example}[Non-homogeneous YBME]\label{Ex3}
		Let $A=J_2(0)\oplus J_1(0),
		\;
		B=[b_{ij}],$ $X=[x_{ij}]\in M_3(\mathbb C).$
		A direct computation gives
		$AXA-XAX=B$ is equivalent to
		\begin{equation}\label{E2}
			\begin{aligned}
				&b_{11}=-x_{11}x_{21},\quad
				b_{12}=x_{21}-x_{11}x_{22},\quad
				b_{13}=-x_{11}x_{23},\\
				&b_{21}=-x_{21}^2,\quad
				b_{22}=-x_{21}x_{22},\quad
				b_{23}=-x_{21}x_{23},\\
				&b_{31}=-x_{31}x_{21},\quad
				b_{32}=-x_{31}x_{22},\quad
				b_{33}=-x_{31}x_{23}.
			\end{aligned}
		\end{equation}
		
		\medskip
		\textbf{1) Consistency relations on $B$.}
		
		\emph{Case A: $b_{21}\neq 0$.} From the equations for $b_{11}, b_{12}, b_{21}, b_{22},$ we will elliminate $x_{11}$ and $x_{22}$ using $x_{21}$. Since $b_{21}\neq 0$, we will take $x_{21}$ to be the square root of $-b_{21}$. From the first two equations, $x_{11}=-b_{11}/x_{21}$ and $x_{22}=-b_{22}/x_{21}$, and consequently $b_{12}=x_{21}-b_{11}b_{22}/x_{21}^2$. Using $x_{21}^2=-b_{21}$ and rearranging, we arrive at
		$x_{21}=b_{12}-b_{11}b_{22}/b_{21}$.
		We will call this quantity $s$, i.e.
		\begin{equation}\label{E3}
			s:= b_{12}-\frac{b_{11}b_{22}}{b_{21}}.
		\end{equation}
		Then \(AXA-XAX=B\) is consistent if and only if
		\begin{equation}\label{E4} 
			\begin{aligned}
				s^2=-\,b_{21},\;
				b_{13}=\frac{b_{11}b_{23}}{b_{21}},\;
				b_{32}=\frac{b_{31}b_{22}}{b_{21}},\; 
				b_{33}=\frac{b_{31}b_{23}}{b_{21}}.
			\end{aligned}
		\end{equation}
		
		\emph{Case B: $b_{21}=0$.}
		Then consistency is equivalent to
		\begin{equation}\label{E5}
			b_{11}=b_{21}=b_{22}=b_{23}=b_{31}=0,
		\end{equation}
		while the remaining four equations may be rewritten as
		$$
		\left[\begin{array}{cc}
			b_{12} & b_{13}\\
			b_{32} & b_{33}
		\end{array}\right]=-\left[\begin{array}{c}
			x_{11}\\
			x_{31}
		\end{array}\right][x_{22}\;x_{23}].
		$$
		Since any matrix of the form $uv^T$ is of the rank at most $1$, the rank of 
		$$\left[\begin{array}{cc}
			b_{12} & b_{13}\\
			b_{32} & b_{33}
		\end{array}\right]$$
		is at most $1$. It is a $2\times 2$ matrix, hence it must be
		\begin{equation}\label{E6}
			b_{12}b_{33}-b_{13}b_{32}=0.
		\end{equation}
		
		\medskip
		\textbf{2) All solutions $X$.}
		
		\emph{Case A: $b_{21}\neq 0$ and \eqref{E4} holds.}
		Let $s$ be as in \eqref{E3}. Then all solutions are given by
		\begin{equation}\label{E7}
			X=
			\begin{bmatrix}
				-\dfrac{b_{11}}{s} & \alpha & \beta\\[6pt]
				s & -\dfrac{b_{22}}{s} & -\dfrac{b_{23}}{s}\\[6pt]
				-\dfrac{b_{31}}{s} & \gamma & \delta
			\end{bmatrix},
			\qquad
			\alpha,\beta,\gamma,\delta\in\mathbb C.
		\end{equation}
		
		\emph{Case B: $b_{21}=0$ and \eqref{E5}--\eqref{E6} hold.}
		Choose any factorization
		\begin{equation}\label{E8}
			\begin{bmatrix}
				b_{12}&b_{13}\\
				b_{32}&b_{33}
			\end{bmatrix}
			=
			-\,\begin{bmatrix}u\\ v\end{bmatrix}
			\begin{bmatrix}p&q\end{bmatrix},
		\end{equation}
		and set
		\begin{equation}\label{E9}
			x_{21}=0,\quad
			x_{11}=u,\quad
			x_{31}=v,\quad
			x_{22}=p,\quad
			x_{23}=q,
		\end{equation}
		with $x_{12},x_{13},x_{32},x_{33}$ arbitrary.  
		This describes all solutions in Case B. \hfill$\clubsuit$
	\end{example}
	
	\begin{example}[YBME with given commutator]\label{Ex4}
		Let us consider the square-periodic matrix $A=\operatorname{diag}(J_2(0), J_1(0),i,-i)$ satisfying $A^4=-A^2$ (i.e. $n=4,$ $t=-1$). We have $N_1=3, N_2=2$ and the core-nilpotent decomposition is $A_1=J_2(0)\oplus 0, A_2=i\oplus -i$. We want to describe the class of all solutions $X$ for given matrix $Y$ with the following block-matrix forms:
		$$
		Y=\left[\begin{array}{cc}
			Y_1 & Y_2\\
			Y_3 & Y_4
		\end{array}\right]=\left[\begin{array}{ccc|cc}
			0 & 0 & 0 & -i & i\\
			0 & 0 & 0 & 0 & 0\\
			0 & 0 & 0 & 0 & 0\\
			\hline
			0 & i & 0 & 0 & 0\\
			0 & -1 & 0 & 0 & 0
		\end{array}\right],\;
		X=\left[\begin{array}{cc}
			X_1 & X_2\\
			X_3 & X_4
		\end{array}\right]=[x_{ij}]\in\mathbb{C}^{5\times 5}.
		$$
		
		First, using \eqref{X2series} and \eqref{X3series}, we see that for those $Y_2$ and $Y_3$ we obtain the unique matrices $X_2$ and $X_3$ (denoted by $\widehat{Y_2}$ and $\widehat{Y_3}$ in the paper):
		$$
		\widehat{X_2}=\left[\begin{array}{cc}
			1 & 1\\
			0 & 0\\
			0 & 0
		\end{array}\right],\;
		\widehat{X_3}=\left[\begin{array}{ccc}
			0 & 1 & 0\\
			0 & -i & 0
		\end{array}\right].
		$$
		
		Theorem \ref{Th3.2.} together with Example \ref{Ex1} describes the set of all admissible $X_1$ such that $A_1X_1-X_1A_1=Y_1=0_{3\times 3}$:
		$$X_1^\sigma =\left[\begin{array}{ccc}
			x_{11} & x_{12} & x_{13}\\
			0 & x_{11} & 0\\
			0 & x_{32} & x_{33}
		\end{array}\right].$$
		
		Theorem \ref{Th4.1.} describes the set of all admissible $X_4$ such that $A_2X_4-X_4A_2=Y_4=0_{2\times 2}$: 
		$$X_4^\sigma =\left[\begin{array}{cc}
			x_{44} & 0\\
			0 & x_{55}
		\end{array}\right].$$
		Therefore, so far we have the following wide class that contains all solutions (and many non-solutions) of $AX-XA=Y$:
		$$X^\sigma=\left[\begin{array}{ccc|cc}
			x_{11} & x_{12} & x_{13} & 1 & 1\\
			0 & x_{11} & 0 & 0 & 0\\
			0 & x_{32} & x_{33} & 0 & 0\\
			\hline
			0 & 1 & 0 & x_{44} & 0\\
			0 & -i & 0 & 0 & x_{55}
		\end{array}\right].$$
		So far we solved $AX-XA=Y$ completely, and now we want to address the following system, bearing in mind that $A_1X_1-X_1A_1=Y_1$ and $A_2X_4-X_4A_2=Y_4$:
		\begin{align}
			A_1 X_1 A_1 - X_1 A_1 X_1 &= \widehat{Y_2} A_2\widehat{Y_3}\\
			A_2 X_4 A_2 - X_4 A_2 X_4 &= \widehat{Y_3} A_1 \widehat{Y_2},\\
			X_1 A_1 \widehat{Y_2}+\widehat{Y_2} A_2 X_4 &= A_1 \widehat{Y_2} A_2,\\
			\widehat{Y_3} A_1 X_1+X_4 A_2\widehat{Y_3} &= A_2\widehat{Y_3} A_1.
		\end{align}
		We will now investigate the consistency of the system \eqref{secthird}, that is, the last two equations; then we will consider the first and the second one, and the final solution set will be their intersection. Since
		$$\A=\left[
		\begin{array}{ccccccccccccc}
			0 & 0 & 0 & 0 & 0 & 0 & 0 & 0 & 0 & i & -i & 0 & 0 \\
			0 & 0 & 0 & 0 & 0 & 0 & 0 & 0 & 0 & 0 & 0 & 0 & 0 \\
			0 & 0 & 0 & 0 & 0 & 0 & 0 & 0 & 0 & 0 & 0 & 0 & 0 \\
			0 & 0 & 0 & 0 & 0 & 0 & 0 & 0 & 0 & 0 & 0 & i & -i \\
			0 & 0 & 0 & 0 & 0 & 0 & 0 & 0 & 0 & 0 & 0 & 0 & 0 \\
			0 & 0 & 0 & 0 & 0 & 0 & 0 & 0 & 0 & 0 & 0 & 0 & 0 \\
			0 & 0 & 0 & 0 & 0 & 0 & 0 & 0 & 0 & 0 & 0 & 0 & 0 \\
			0 & 0 & 0 & 0 & 0 & 0 & 0 & 0 & 0 & 0 & 0 & 0 & 0 \\
			0 & 0 & 0 & 0 & 0 & 0 & 0 & 0 & 0 & i & 0 & -1 & 0 \\
			0 & 0 & 0 & 0 & 0 & 0 & 0 & 0 & 0 & 0 & i & 0 & -1 \\
			0 & 0 & 0 & 0 & 0 & 0 & 0 & 0 & 0 & 0 & 0 & 0 & 0 \\
			0 & 0 & 0 & 0 & 0 & 0 & 0 & 0 & 0 & 0 & 0 & 0 & 0 \\
		\end{array}
		\right]_{12\times 13}$$
		and $A_1\widehat{X_2}A_2=0_{3\times 2},\;A_2\widehat{X_3}A_1=0_{2\times 3}$, vectorization gives $\mathbf{b}=0_{1\times 12}$, so the system
		$\A\mathbf{x}=\mathbf{b}$ with $\mathbf{x}=\{z_1, z_2, z_3, z_4, z_5, z_6, z_7, z_8, z_9, z_{10}, z_{11}, z_{12}, z_{13}\}$ is consistent iff $\A\A^\dag \mathbf{b}=\mathbf{b}$ (which is true) and all solutions are given by
		\begin{align*} 
			\mathbf{x}&=\left[z_1, z_2, z_3, z_4, z_5, z_6, z_7, z_8, z_9, \frac{z_{10} + z_{11}- i z_{12}-i z_{13}}{4},\right. \\
			&\phantom{\qquad}  \left.\frac{z_{10} + z_{11}- i z_{12}-i z_{13}}{4}, \frac{iz_{10} + iz_{11}+z_{12}+ z_{13}}{4}, \frac{iz_{10} + iz_{11}+z_{12}+ z_{13}}{4}\right]^T
		\end{align*} 
		and the de-vectorization leads to
		$$X''=\left[
		\begin{array}{ccc|cc}
			z_1 & z_4 & z_7 & 1 & 1 \\
			z_2 & z_5 & z_8 & 0 & 0 \\
			z_3 & z_6 & z_9 & 0 & 0 \\
			\hline 
			0 & 1 & 0 & \frac{z_{10}}{4}+\frac{z_{11}}{4}-\frac{i z_{12}}{4}-\frac{i z_{13}}{4} & \frac{i z_{10}}{4}+\frac{i z_{11}}{4}+\frac{z_{12}}{4}+\frac{z_{13}}{4} \\
			0 & -i & 0 & \frac{z_{10}}{4}+\frac{z_{11}}{4}-\frac{i z_{12}}{4}-\frac{i z_{13}}{4} & \frac{i z_{10}}{4}+\frac{i z_{11}}{4}+\frac{z_{12}}{4}+\frac{z_{13}}{4} \\
		\end{array}
		\right].$$

		Let us now consider the non-homogeneous YBME $$A_1\widehat{X_1}A_1-\widehat{X_1}A_1\widehat{X_1}=B_{12}=\widehat{Y_2}A_2\widehat{Y_3}=\left[\begin{array}{ccc}
			0 & i-1 & 0\\
			0 & 0 & 0\\
			0 & 0 & 0
		\end{array}\right].$$
		using Theorem \ref{inhomYBMEsol}. We read:
		$$M=[0\;x_{11}\;0],\;R_2=R_3=0_{1\times 3},\;R_1(0)=[0\;1-i\;0].$$
		Also, $M^-=[\alpha\;1/x_{11}\;\beta]^T$ for any complex $\alpha,\beta$, implying $MM^-=[1]$ and $M^-M=\left[\begin{array}{ccc}
			0 & \alpha x_{11} & 0\\
			0 & 1 & 0\\
			0 & \beta x_{11} & 0
		\end{array}\right]$. The equation is consistent because $x_{21}^2=-b_{21}(=0)$ holds, as well as \eqref{cond2} and \eqref{cond3}. Now, to describe the solutions, we follow the procedure outlined in the theorem:
		\begin{itemize}
			\item[i)] $x_{21}=0$
			\item[ii)] $x_{22}=ub_{22}+w_{22}=w_{22},\;x_{23}=vb_{23}+w_{23}=w_{23}$ for any complex $u,v$
			\item[iii)] $x_{31}=0+w_{31}(1-MM^-)=0$ and
			$$x_{11}=[0\;1-i\;0]\left[\begin{array}{c}
				\alpha\\
				1/x_{11}\\
				\beta 
			\end{array}\right]+w_{11}(1-MM^-)=\frac{1-i}{x_{11}}.$$
		\end{itemize}
		This part iii) implies $x_{11}^2=1-i$, hence $x_{11}=\pm\sqrt{1-i}$, while from earlier we know that $x_{22}=x_{11}$ so $w_{22}$ is not arbitrary at all although ii) may mislead the reader. From similar reasons $x_{23}=0$ not arbitrary $w_{23}$. Therefore, so far we know that
		$$\widetilde{X_1}=\left[\begin{array}{ccc}
			\pm\sqrt{1-i} & w_{12} & w_{13}\\
			0 & \pm\sqrt{1-i} & 0\\
			0 & w_{23} & w_{33},
		\end{array}\right]$$
		where $w_{12}, w_{13}, w_{32}, w_{33}$ are arbitrary complex numbers.
		
		Now we will solve the homogeneous YBME
		$$A_2\widehat{X_4}A_2-\widehat{X_4}A_2\widehat{X_4}=\widehat{Y_3}A_1\widehat{Y_2}=0_{2\times 2}.$$
		Immediate calculation shows that there are precisely four solutions, two trivial ($0$ and $A_2$), and two spectral:
		$$A_2P_i=\left[\begin{array}{cc}
			i & 0\\
			0 & 0
		\end{array}\right],\;A_2P_{-i}=\left[\begin{array}{cc}
			0 & 0\\
			0 & -i
		\end{array}\right].$$
		In the intersection of those two sets of solutions, we conclude that it must be $X_4=0_{2\times 2}$. Indeed, the $X_4-$block obtained via $\A$ is of the form
		$$\left[\begin{array}{cc}
			h & ih\\
			h & ih
		\end{array}\right]=h\left[\begin{array}{cc}
			1 & i\\
			1 & i
		\end{array}\right],$$
		and the only matrix among $0, A_2, A_2P_i, A_2P_{-i}$ that can be written in this form is $0$.
		Therefore, the solutions to the original YBME are given by
		$$X=\left[\begin{array}{ccc|cc}
			\pm\sqrt{1-i} & x_{12} & x_{13} & 1 & 1\\
			0 & \pm\sqrt{1-i} & 0 & 0 & 0\\
			0 & x_{32} & x_{33} & 0 & 0\\
			\hline
			0 & 1 & 0 & 0 & 0\\
			0 & -i & 0 & 0 & 0
		\end{array}\right],$$
		and they depend on four parameters $x_{12}, x_{13}, x_{32}, x_{33}\in\CC$. \hfill$\clubsuit$
	\end{example}
	
	\subsection*{Concluding remarks}
	In this paper we have shown how the employment of an \emph{a priori} provided commutator $Y\equiv AX-XA$ defines entire classes of new non-commuting solutions to the YBME $AXA=XAX$. Moreover, we have demonstrated that this approach is also valid in solving some special cases of the inhomogeneous YBME $AXA=XAX+B$, which, to the best of our knowledge, has not yet been studied. The obtained necessary conditions for the existence of such commutator matrices $Y$ are directly derived from the conditions imposed on the coefficient matrix $A$: in our paper, we assumed that $A^n=tA^2$, however, this idea would work for any polynomially-cyclic condition imposed on the matrix $A$. \\
	
	\noindent\textbf{Comment.} These results were obtained as a part of the Student Internship 2025 organized by the Mathematical Institute of the Serbian Academy of Sciences and Arts.\\
	
	\noindent\textbf{Conflict of interest.} The authors declare that there are no conflicts of interest in publishing the findings obtained in this paper.\\
	
	\noindent\textbf{Data availability statement.} Data availability is not applicable as no data sets were generated during this research.\\
	
	\noindent\textbf{Funding.} The first author is supported by the Ministry of Education, Science and Technological Development, Republic of Serbia, grant No. 451-03-33/2026-03/200029. The second author is supported by the Ministry of Education, Science and Technological Development, Republic of Serbia, grant No. 451-03-34/2026-03/200124.

	

\begin{thebibliography}{9}
		
		
		\bibitem{BiG} A. Ben-Israel, T. N. E. Greville, {\it Generalized inverses, theory and applications}, 2nd ed., Springer, 2003.
		
		\bibitem{DIN} N. \v C. Din\v ci\'c, {\it Solving the Sylvester equation $AX-XB = C$ when $\sigma(A)\cap \sigma(B) \neq\emptyset$}, Electron. J. Linear Algebra 35 (2019), 1--23.
		
		\bibitem{DinDjor} N. \v C. Din\v ci\'c and B. D. Djordjevi\'c, {\it On the intrinsic structure of the solution set to the Yang-Baxter-like matrix equation}, Rev. Real Acad. Cienc. Exactas Fis. Nat. Ser. A-Mat. 116:73 (2022).
		
		
		\bibitem{NCDBDD2}N. \v C. Din\v ci\'c and B. D. Djordjevi\'c, {\it Yang-Baxter-like matrix equation: a road less taken}. In Matrix and Operator
		Equations (Mathematics Online First Collections), 2023.
		
		\bibitem{BDperm} B. D. Djordjevi\'c, {\it Doubly stochastic and permutation solutions to $AXA=XAX$ when $A$ is a permutation matrix}, Linear Algebra Appl. 661 (2023), 79--105.
		
		
		\bibitem{BDD} B. D. Djordjevi\'c, {\it The equation $AX-XB=C$ without a unique solution: the ambiguity which benefits applications}, Zb. Rad. (Beogr.) 2022.
		
		\bibitem{BDND1} B. D. Djordjevi\'c and  N. \v C. Din\v ci\'c, {\it Classification and approximation of solutions to Sylvester matrix equation}, Filomat 33 (13) (2019), 4261--4280.\\
		https://doi.org/10.2298/FIL1913261D
		
		
		\bibitem{Don2016} Q. Dong, {\it Projection-based commuting solutions of the Yang–Baxter
			matrix equation for non-semisimple eigenvalues}, Appl. Math. Lett. 64 (2017), 231--234
		
		\bibitem{DonDin2017} Q. Dong and J. Ding, {\it Complete commuting solutions of the Yang–Baxter-like matrix equation for diagonalizable matrices}, Computers and Mathematics with Applications 72:1 (2016), 194--201
		
		\bibitem{Dong2018} Q. Dong, J. Ding, Q. Huang, {\it Commuting solutions of a quadratic matrix equation for nilpotent matrices}, Algebra Colloquium 25 (2018), 31-44.\ 10.1142/S1005386718000032
		
		
		\bibitem{Higham} N. J. Higham, {\it Functions of matrices. Theory and computation}, SIAM, 2008.
		
		\bibitem{QH} Q. Huang, M. Saeed Ibrahim Adam, J. Ding, L. Zhu, {\it All non-commuting solutions of the Yang-Baxter matrix equation for a class of diagonalizable matrices}, Oper. Matrices 13 (1) (2019) 87--195.
		
		\bibitem{Pen} R. Penrose, {\it A generalized inverse for matrices}, Proc. Cambridge Philos. Soc. 51 (1955), 406--413.
		
		
		\bibitem{MSIA} M. Saeed Ibrahim Adam, J. Ding, and Q. Huang, {\it Explicit solutions of the Yang-Baxter like matrix equation for an idempotent matrix}, Appl. Math. Lett. 63 (2017), 71--76.
		
		\bibitem{MSIAJDQ} M. Saeed Ibrahim Adam, J. Ding, Q. Huang and L. Zhu, {\it Solving a class of quadratic matrix equations}, Appl. Math. Lett. 82 (2018), 58--63.
		
		\bibitem{MSIAJDQHLZ} M. Saeed Ibrahim Adam, J. Ding, Q. Huang and L. Zhu, {\it All solutions of the Yang-Baxter-like matrix equation when $A^3=A$}, Journal Appl. Anal. Comp. 9:3 (2019) 1022--1031\\ 
		doi: 10.11948/2156-907X.20180244
		
		
		\bibitem{DZJD} D. Zhou and J. Ding, {\it All solutions of the Yang-Baxter-like matrix equation for nilpotent matrices of index two}, Complexity. 2020. 1-7.\\ 10.1155/2020/2585602.
		
		%
		\bibitem{a4a} D. Zhou, J. Liao Y. Gan H. Xu, and R. Zhang, \textit{All solutions of the Yang-Baxter-like matrix equation $AXA=XAX$ with $A$ satisfying $A^4=A$}, J. Math. Anal. Appl. 552 (2025) 129785\\ https://doi.org/10.1016/j.jmaa.2025.129785
		
		\bibitem{ZYWDH2021} D. Zhou, X. Ye, Q. Wang, J. Ding, and W. Hu, {\it Explicit solutions of the Yang-Baxter-like matrix equation for a singular diagonalizable matrix with three distinct eigenvalues}, Filomat 35 (2021) 3971-3982,\\
		10.2298/FIL2112971Z
		
		
	\end{thebibliography}
\end{document}